\newtheorem{introtheorem}{Theorem}
\newtheorem{theorem}{Theorem}[subsection]
\newtheorem{lemma}[theorem]{Lemma}
\newtheorem{proposition}[theorem]{Proposition}
\newtheorem{corollary}[theorem]{Corollary}
\theoremstyle{definition}
\newtheorem{definition}[theorem]{Definition}
\newtheorem{remark}[theorem]{Remark}
\newtheorem{example}[theorem]{Example}
\newcommand{\op}[1]{\operatorname{#1}}
\newcommand{\leftexp}[2]{{\vphantom{#2}}^{#1}{#2}}
\newcommand{\dbcoh}[1]{\operatorname{D}^{\operatorname{b}}(\operatorname{coh }#1)}
\newcommand{\weezer}{\leftexp{=}{\kern-0.23em\operatorname{W}}^{\kern-0.21em =}}
\def\Z{\op{\mathbb{Z}}}
\def\C{\op{\mathbb{C}}}
\title[Wall crossing for derived categories]{Wall crossing for derived categories of moduli spaces of sheaves on rational surfaces}
\author{Matthew Robert Ballard}
\address{
  \begin{tabular}{l}
   Matthew Robert Ballard  \\ 
   \hspace{.1in} University of South Carolina, Columbia, SC, USA \\
   \hspace{.1in} Email: {\bf ballard@math.sc.edu} \\
  \end{tabular}
}
\numberwithin{equation}{section}
\begin{document}
\renewcommand{\labelenumi}{\emph{\alph{enumi})}}

\begin{abstract}
 We remove the global quotient presentation input in the theory of windows in derived categories of smooth Artin stacks of finite type. As an application, we use existing results on flipping of strata for wall-crossing of Gieseker semi-stable torsion-free sheaves of rank two on rational surfaces to produce semi-orthogonal decompositions relating the different moduli stacks. The complementary pieces of these semi-orthogonal decompositions are derived categories of products of Hilbert schemes of points on the surface.  
\end{abstract}

\maketitle

\section{Introduction} \label{section: Introduction}

A central question in the theory of derived categories is the following: given a smooth, projective variety $X$, how does one find interesting semi-orthogonal decompositions of its derived category, $\dbcoh{X}$? Historically, two different parts of algebraic geometry have fed this question: birational geometry and moduli theory. \cite{Orl92,BO95,OrlK3,Bridgeland-flops,KawD-K,Kaw06,Kuz09b} provides a non-exhaustive highlight reel for this approach. 

This paper focuses on the intersection of birational geometry and moduli theory. Namely, given some moduli problem equipped with a notion of stability, variation of the stability condition often leads to birational moduli spaces. As such, it is natural to compare the derived categories in this situation. Let us consider the well-understood situation of torsion-free rank two semi-stable sheaves on rational surfaces \cite{EG,FQ,MW}. The flipping of unstable strata under change of polarization was investigated to understand the change in the Donaldson invariants. It provides the input for the following result, which can be viewed as a categorification of the wall-crossing formula for Donaldson invariants.

\begin{introtheorem}[Corollary~\ref{corollary: SOD for whole wall crossing}] \label{theorem: intro}
 Let $S$ be a smooth rational surface over $\C$ with $L_-$ and $L_+$ ample lines bundles on $S$ separated by a single wall defined by unique divisor $\xi$ satisfying
 \begin{gather*}
  L_- \cdot \xi < 0 < L_+ \cdot \xi \\
  0 \leq \omega_S^{-1} \cdot \xi.
 \end{gather*}
 Let $\mathcal M_{L_{\pm}}(c_1,c_2)$ be the $\mathbb{G}_m$-rigidified moduli stack of Gieseker $L_{\pm}$-semi-stable torsion-free sheaves of rank $2$ with first Chern class $c_1$ and second Chern class $c_2$.
 
 There is a semi-orthogonal decomposition 
 \begin{gather*}
  \dbcoh{ \mathcal M_{L_+}(c_1,c_2) } = \left\langle \underbrace{\dbcoh{H^{l_{\xi}}}, \ldots,  \dbcoh{H^{l_{\xi}}}}_{\mu_{\xi}}, \ldots  \right. \\
  \left. \underbrace{\dbcoh{H^0}, \ldots, \dbcoh{H^0}}_{\mu_{\xi}}, \dbcoh{ \mathcal M_{L_-}(c_1,c_2)} \right\rangle 
 \end{gather*}
 where 
 \begin{align*}
  l_{\xi} & := (4c_2 - c_1^2 + \xi^2)/4 \\
  H^l & := \op{Hilb}^l(S) \times \op{Hilb}^{l_{\xi} - l}(S) \\
  \mu_{\xi} & := \omega_S^{-1} \cdot \xi
 \end{align*}
 with the convention that $\op{Hilb}^0(S) := \op{Spec} \C$.
\end{introtheorem}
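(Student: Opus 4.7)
The plan is to recast the wall-crossing as a variation-of-GIT problem on a smooth Artin stack and then apply the window machinery developed earlier in the paper, which crucially no longer requires a global quotient presentation. First, using the classical analyses of Ellingsrud--G\"ottsche, Friedman--Qin, and Matsuki--Wentworth, I would assemble a smooth Artin stack $\mathfrak M$ containing both $\mathcal M_{L_\pm}(c_1,c_2)$ as open substacks whose complements are the flipping loci across $\xi$. Concretely, the flipping stratum labelled by $l$ parametrizes sheaves $E$ fitting in a non-split extension
\[
 0 \to I_{Z_1}(F_1) \to E \to I_{Z_2}(F_2) \to 0
\]
with $Z_1 \in \op{Hilb}^l(S)$, $Z_2 \in \op{Hilb}^{l_\xi - l}(S)$, and $F_1 - F_2 = \xi$ (the pair $(F_1,F_2)$ determined from $(c_1,\xi)$). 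Each such stratum $\mathfrak S_l$ contracts onto its $\mathbb{G}_m$-fixed center of polystable sheaves $I_{Z_1}(F_1) \oplus I_{Z_2}(F_2)$, and the $\mathbb{G}_m$-rigidified center is precisely $H^l = \op{Hilb}^l(S) \times \op{Hilb}^{l_\xi - l}(S)$.

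Next I would compute the $\mathbb{G}_m$-weights of the normal bundle $N_{\mathfrak S_l / \mathfrak M}$ restricted to the center. The positive- and negative-weight directions are governed by $\op{Ext}^1(I_{Z_2}(F_2), I_{Z_1}(F_1))$ and $\op{Ext}^1(I_{Z_1}(F_1), I_{Z_2}(F_2))$ respectively, with controllable contributions from $\op{Hom}$ and $\op{Ext}^2$ groups. Using Riemann--Roch together with Serre duality, and invoking the numerical hypotheses $L_- \cdot \xi < 0 < L_+ \cdot \xi$ and $\omega_S^{-1} \cdot \xi \geq 0$ (which force the vanishing of the extremal $\op{Hom}$ and $\op{Ext}^2$ pieces that could otherwise spoil the count), the signed weight difference of the two normal directions should simplify to
\[
 \op{rk}(N^+) - \op{rk}(N^-) = \omega_S^{-1} \cdot \xi = \mu_\xi.
\]
This asymmetry is the ``window width'' that governs the multiplicity of the center's derived category in the final decomposition, and its independence from the particular $(Z_1,Z_2)$ is what lets the contribution over each stratum factor through the base $H^l$.

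Then I would apply the stack-level window theorem stratum-by-stratum. For each $l$, opening up the complement of $\mathfrak S_l$ moves between two adjacent open substacks of $\mathfrak M$, and the window theorem produces a semi-orthogonal decomposition whose new piece is $\mu_\xi$ copies of $\dbcoh{H^l}$. Ordering the flipping strata by decreasing $l$ (as dictated by the Kempf--Ness / Halpern-Leistner filtration and matching the arrangement in the statement), and iterating the theorem along this filtration from $\mathcal M_{L_+}(c_1,c_2)$ down through each $\mathfrak S_l$ until landing on $\mathcal M_{L_-}(c_1,c_2)$, concatenates these SODs into the stated one.

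The main obstacle is Step 2, coupled with the geometric identification in Step 1: one must show that $\mathfrak M$ is smooth along the flipping loci so that the obstruction-theoretic computation of $N^\pm$ is valid, that the $\op{Hom}$ and $\op{Ext}^2$ corrections vanish in the relevant directions so the naive rank difference really equals $\omega_S^{-1} \cdot \xi$, and that the residual $\mathbb{G}_m$ of the polystable direct sum aligns correctly with the $\mathbb{G}_m$-rigidification present in the definition of $\mathcal M_{L_\pm}$ (so the center is $H^l$ and not $H^l \times B\mathbb{G}_m$). The rest — extracting semi-orthogonal decompositions via the new window theorem and assembling them across strata — is then a structural bookkeeping step, relying on exactly the removal of the global quotient hypothesis advertised in the abstract.
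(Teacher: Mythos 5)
Your proposal follows essentially the same route as the paper: the ambient stack $\mathfrak M$ is the stack of Mumford $L_0$-semi-stable sheaves, the flipping strata are the Friedman--Qin extension loci $\mathcal Z^k(F)$ (vector bundles over $H^k$ after rigidification), the multiplicity $\mu_\xi$ arises as the rank difference of the two conormal bundles computed via Riemann--Roch with the stated hypotheses killing $\op{Hom}$ and $\op{Ext}^2$, and the SOD is assembled by iterating the stacky window theorem over the intermediate open substacks. The obstacles you flag (smoothness along the strata via $\op{Ext}^2$-vanishing on a rational surface, and the alignment of the residual $\mathbb{G}_m$ with the rigidification) are exactly the points the paper addresses.
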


While Theorem~\ref{theorem: intro} is interesting in its own right, the method might be moreso. Indeed, Theorem~\ref{theorem: intro} represents one of multiple possible applications, including to moduli spaces of Bridgeland semi-stable objects on rational surfaces, \cite{ABCH}.

Theorem~\ref{theorem: intro} follows from the general technology that goes under the heading of windows in derived categories. Windows provide a framework for addressing the central question put forth above; they are a machine for manufacturing interesting semi-orthogonal decompositions of $\dbcoh{X}$. They have a rich history with contributions by many mathematicians and physicists \cite{KawFF,VdB,Orl09,HHP,Seg2,HW,Shipman,HL12,BFKGIT,DS}. However, windows have not yet achieved their final form. Previous work dealt with an Artin stack $\mathcal X$ plus a choice of global quotient presentation $\mathcal X = [X/G]$. Locating an appropriate quotient presentation for a given $\mathcal X$ is not convenient in applications, in particular the above, so one would like a definition of a window more intrinsic to $\mathcal X$. Section~\ref{section: SODs and BB} provides such a definition using an appropriate type of groupoid in Bia\l{}ynicki-Birula strata and extends the prior results on semi-orthogonal decompositions \cite{BFKGIT} to this setting, see Theorem~\ref{theorem: elementary wall crossing}. A similar extension should appear in \cite{HL14}. 

\vspace{2.5mm}
\noindent \textbf{Acknowledgments:}
The author has benefited immensely from conversations and correspondence with Arend Bayer, Yujiro Kawamata, Colin Diemer, Gabriel Kerr, Ludmil Katzarkov, and Maxim Kontsevich, and would like to thank them all for their time and insight. The author would especially like to thank David Favero for a careful reading of and useful suggestions on a draft of this manuscript. Finally, the author would also like to thank his parents for fostering a hospitable work environment. 

The author was supported by a Simons Collaboration Grant.
\vspace{2.5mm}

\section{Semi-orthogonal decompositons and BB-strata} \label{section: SODs and BB}

For the whole of this section, $k$ will denote an algebraically-closed field. The term, variety, means a separated, reduced scheme of finite-type over $k$. All points of a variety are closed points unless otherwise explicitly stated.

In this section, we extend the results of \cite{BFKGIT} by removing the global quotient presentation from the input data. We try to keep this section as self-contained as possible.

\subsection{Truncations of sheaves on BB strata}

We begin with the following definition. Let $X$ be a smooth quasi-projective variety equipped with a $\mathbb{G}_m$-action. Let $X^{\mathbb{G}_m}$ denote the fixed subscheme of the action and let $X_0$ be a choice of a connected component of the fixed locus. We recall the following well-known result of Bia\l{}ynicki-Birula. 

\begin{theorem} \label{theorem: BB}
 The fixed locus $X^{\mathbb{G}_m}$ is smooth and is a closed subvariety of $X$. Let $X_0$ be a connected component of $X^{\mathbb{G}_m}$. There exists a unique smooth and locally-closed $\mathbb{G}_m$-invariant subvariety $X^+_0$ of $X$ and a unique morphism $\pi: X^+_0 \to X_0$ such that 
 \begin{enumerate}
  \item $X_0$ is a closed subvariety of $X^+_0$.
  \item The morphism $\pi: X_0^+ \to X _0$ is an equivariantly locally-trivial fibration of affine spaces over $X_0$.
  \item For a point $x \in X_0$, there is an equality 
  \begin{displaymath}
   T_x X_0^+ = \left(T_x X\right)^{\geq 0}
  \end{displaymath}
  where the right hand side is the subspace of non-negative weights of the geometric tangent space.
 \end{enumerate}
\end{theorem}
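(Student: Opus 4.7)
The plan is to reduce the statement to the linear affine case and then globalize using Sumihiro's equivariant embedding theorem.

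First I would dispatch the easy assertion that $X^{\mathbb{G}_m}$ is closed: the fixed subscheme is the scheme-theoretic equalizer of the action morphism $\sigma \colon \mathbb{G}_m \times X \to X$ and the projection, and separatedness of $X$ together with the fact that $\mathbb{G}_m$ is connected allows one to express $X^{\mathbb{G}_m}$ as the intersection of the graphs of $\sigma(t,-)$ as $t$ ranges over $\mathbb{G}_m$, hence as a closed subvariety. For smoothness of $X^{\mathbb{G}_m}$ and for the construction of $X_0^+$, both claims are local near a fixed point $x_0 \in X_0$. By Sumihiro's theorem a $\mathbb{G}_m$-invariant affine open neighborhood $U \subset X$ of $x_0$ exists, and $U$ embeds $\mathbb{G}_m$-equivariantly into an affine space $V$ with linear action. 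After such a linearization, the completed local ring $\widehat{\mathcal O}_{X,x_0}$ is a power series ring in the eigenbasis of the action on $T_{x_0} X$, and the fixed subscheme is cut out by the non-zero weight coordinates; smoothness of $X^{\mathbb{G}_m}$ at $x_0$ and the identification $T_{x_0} X^{\mathbb{G}_m} = (T_{x_0} X)^0$ follow.

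Next I would construct $X_0^+$ functorially. Set-theoretically it is the locus of points $x$ for which the orbit map $\mathbb{G}_m \to X,\; t \mapsto t \cdot x$, extends to a morphism $\mathbb{A}^1 \to X$ sending $0$ to a point of $X_0$. The scheme structure is obtained by representing the functor on test schemes $T$ that assigns the set of $\mathbb{G}_m$-equivariant $T$-morphisms $\mathbb{A}^1_T \to X$ whose restriction at $0$ lands in $X_0$. In the local affine linear model $V = \op{Spec}\, k[z_1,\ldots,z_n]$ with $z_i$ of weight $w_i$, the limit $\lim_{t\to 0} t \cdot v$ exists precisely when $v$ has vanishing coordinates in negative weights, so $V_0^+ = V(\{z_i : w_i < 0\})$ is the closed subscheme of non-negative weights. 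This gives the local model both for the smoothness of $X_0^+$, for the fact that $X_0 \hookrightarrow X_0^+$ is a closed embedding, for the tangent-space identity $T_x X_0^+ = (T_x X)^{\geq 0}$, and for the retraction $\pi \colon X_0^+ \to X_0$ as the projection killing the positive-weight coordinates. Uniqueness follows because any such $\mathbb{G}_m$-invariant smooth subvariety through $X_0$ whose tangent space at each point of $X_0$ equals $(T_x X)^{\geq 0}$ is forced, étale-locally, to coincide with the above construction.

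Finally I would assemble the global picture. Cover $X_0$ by $\mathbb{G}_m$-invariant affine opens $U_\alpha$ obtained from Sumihiro, and apply the affine construction to each $U_\alpha$ to produce $(U_\alpha)_0^+$ together with its retraction to $U_\alpha \cap X_0$. Equivariance and the tangent-space characterization force these local constructions to glue uniquely on overlaps, producing $X_0^+$ and $\pi$. The positive-weight normal bundle $N^+ := \big(N_{X_0/X}\big)^{>0}$ is a $\mathbb{G}_m$-equivariant vector bundle on $X_0$, and $\pi$ is modeled locally on the bundle projection $N^+ \to X_0$, giving (2).

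The step I expect to be the main obstacle is the equivariant local triviality of $\pi$: knowing that $X_0^+$ is étale-locally over $X_0$ an affine space is one thing, but promoting this to a Zariski-local equivariant trivialization requires either a careful application of Luna's étale slice theorem at each point of $X_0$ (or its $\mathbb{G}_m$-refinement via Sumihiro) together with a descent argument from the linear model on $N^+$. All other pieces reduce cleanly to the explicit computation on a graded polynomial ring.
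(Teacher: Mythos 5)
The paper does not actually prove this statement: its ``proof'' is a citation to \cite[Theorems 2.1 and 4.1]{BB}, so you are reconstructing a classical theorem from scratch rather than paralleling an argument in the text. Your outline follows the standard modern route (Sumihiro linearization plus the attractor/limit functor), and the pieces concerning closedness and smoothness of $X^{\mathbb{G}_m}$, the local model in a linear representation (the attractor is the non-negative-weight coordinate subspace), the tangent-space identity, the definition of $\pi$ by taking limits as $t \to 0$, and the gluing over invariant affine opens covering $X_0$ are all sound as sketched --- though the identification of $\widehat{\mathcal O}_{X,x_0}$ with a power series ring on an eigenbasis of $T_{x_0}X$ tacitly uses linear reductivity of $\mathbb{G}_m$ to choose an equivariant splitting, and this should be said.

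The genuine gap is exactly where you flag it, and it is not a minor loose end: item (2), the Zariski-local \emph{equivariant triviality} of $\pi: X_0^+ \to X_0$ as a fibration in affine spaces, is the substantive content of Bia\l{}ynicki-Birula's Theorem 4.1, and the fix you propose does not close it. Luna's slice theorem (applicable at a point of $X_0$ on an invariant affine chart, the stabilizer being all of $\mathbb{G}_m$) only yields \'etale-local statements, and an affine-space fibration that is \'etale-locally trivial is not automatically Zariski-locally trivial; no descent mechanism is indicated that would upgrade one to the other, and any such argument must use the $\mathbb{G}_m$-action in an essential way (it is the strict positivity of the weights on the fibers that Bia\l{}ynicki-Birula exploits). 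So as written your construction produces a smooth invariant locally closed $X_0^+$ with a retraction to $X_0$, affine-space fibers, and the correct tangent spaces, but not the locally trivial fibration structure asserted in (2). Relatedly, the uniqueness claim is asserted rather than proved: two smooth invariant subvarieties through $X_0$ with equal tangent spaces along $X_0$ need not coincide in general; the cleaner route is to show that any subvariety satisfying (1)--(3) consists of points whose limits lie in $X_0$, hence is contained in, and by dimension equal to, the attractor.
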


\begin{proof}
 This is \cite[Theorem 2.1 and Theorem 4.1]{BB}
\end{proof}

\begin{remark}
 Note that the weights on the affine fibers are all positive and the set of closed points of a $X^+_0$ from Theorem~\ref{theorem: BB} is 
 \begin{displaymath}
  \{ x \in X \mid \lim_{\alpha \to 0} \sigma(\alpha,x) \in X_0 \}
 \end{displaymath}
 so one may think of $X^+_0$ as the set of points that flow into $X_0$ as $\alpha \to 0$. 
\end{remark}

\begin{definition} \label{definition: BB strata}
 Let $X$ be a smooth quasi-projective variety equipped with a $\mathbb{G}_m$ action. Let $X_0$ be a choice of a connected component of $X^{\mathbb{G}_m}$. The \textbf{BB stratum} associated with $X_0$ is $X^+_0$ appearing in Theorem~\ref{theorem: BB}. 
 
 If $X = X^+_0$, then we shall say also say that $X$ is a BB stratum. In particular, we require that $X^{\mathbb{G}_m}=X_0$ is connected in this situation. 
\end{definition}

\begin{remark}
 Note that $X$ being a BB stratum is equivalent to an extension of the action map $\sigma: \mathbb{G}_m \times X \to X$ to morphism $\mathbb{A}^1 \times X \to X$. 
\end{remark}

Let $X$ be a BB stratum. Let us consider the local situation first. So $X = \op{Spec} R[x_1,\ldots,x_n]$ and we have a coaction map, also denoted by $\sigma$,
\begin{displaymath}
 \sigma: R[x_1,\ldots,x_n] \to R[x_1,\ldots,x_n,u,u^{-1}]
\end{displaymath}
where the weights the $x_i$ are positive. The fixed locus is then $X_0 = \op{Spec} R$. Let $M$ be a $\mathbb{G}_m$ equivariant module over $R$, i.e. a quasi-coherent $\mathbb{G}_m$-equivariant sheaf on $X_0$. Then, we have a map
\begin{displaymath}
 \Delta: M \to M[u,u^{-1}]
\end{displaymath}
corresponding to the equivariant structure. One sets
\begin{displaymath}
 M_i := \{ m \in M \mid \Delta(m) = m \otimes u^i \}.
\end{displaymath}
If we try to globalize this construction, then two different $M_i$'s are identified under an automorphism of $M$, which has degree zero with respect to $\mathbb{G}_m$. Thus, for any quasi-coherent $\mathbb{G}_m$-equivariant sheaf, this gives a quasi-coherent $\mathbb{G}_m$-quasi-coherent sheaf on $X_0$, $\mathcal E_i$. 

\begin{lemma} \label{lemma: decomposition on the fixed locus} 
 Let $X$ be a BB stratum and let $\mathcal E$ be a coherent $\mathbb{G}_m$-equivariant sheaf on the fixed locus $X_0$. Then there is a functorial decomposition 
 \begin{displaymath}
  \mathcal E \cong \bigoplus_{i \in \Z} \mathcal E_i. 
 \end{displaymath}
 In particular, for each $i \in \Z$, the functor 
 \begin{displaymath}
  \mathcal E \mapsto \mathcal E_i
 \end{displaymath}
 is exact. 
\end{lemma}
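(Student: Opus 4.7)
The plan is to exploit the standard equivalence between $\mathbb{G}_m$-equivariant quasi-coherent sheaves on a scheme with trivial $\mathbb{G}_m$-action and $\Z$-graded quasi-coherent sheaves. Since $X_0 = X^{\mathbb{G}_m}$, the $\mathbb{G}_m$-action on $X_0$ is trivial, so this equivalence applies directly to $\mathcal E$, producing the desired weight decomposition.

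First, I would set up the affine-local picture exactly as in the paragraph preceding the statement: cover $X_0$ by affine opens $\op{Spec} R$, on each of which $\mathcal E$ corresponds to a finitely generated $R$-module $M$, and the equivariant structure is encoded by an $R$-linear coaction $\Delta \colon M \to M \otimes_k k[u, u^{-1}]$ satisfying the counit and coassociativity axioms. Define $M_i := \{m \in M \mid \Delta(m) = m \otimes u^i\}$. A standard comodule argument --- write $\Delta(m) = \sum_i m_i \otimes u^i$, apply coassociativity to place each $m_i$ in $M_i$, apply the counit to recover $m = \sum_i m_i$, and check uniqueness of the expression --- yields the direct sum decomposition $M = \bigoplus_{i \in \Z} M_i$ as $R$-modules.

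Second, I would argue that this decomposition globalizes canonically. For any $\mathbb{G}_m$-equivariant morphism $\varphi \colon \mathcal E \to \mathcal F$ between equivariant sheaves on $X_0$, compatibility with the coactions forces $\varphi$ to send the weight-$i$ submodule into the weight-$i$ submodule. Applied to the transition isomorphisms between two affine trivializations of $\mathcal E$, this shows that the local submodules $M_i$ are preserved under base change of $R$ and glue to a well-defined subsheaf $\mathcal E_i \subseteq \mathcal E$, independent of the affine cover; by the same observation the assignment $\mathcal E \mapsto \mathcal E_i$ is functorial.

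Finally, exactness is automatic: a short exact sequence of $\mathbb{G}_m$-equivariant sheaves on $X_0$ consists of weight-preserving maps, so it decomposes as the direct sum over $i$ of short exact sequences on the weight-$i$ pieces, and in particular each $\mathcal E \mapsto \mathcal E_i$ is exact. No serious obstacle is anticipated --- the only point meriting care is the verification that the affine-local decomposition glues, which follows immediately from its canonical, functorial nature, and coherence of each $\mathcal E_i$ follows from the fact that it is a direct summand of the coherent sheaf $\mathcal E$.
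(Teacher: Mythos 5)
Your argument is correct and is precisely the standard comodule/graded-sheaf argument that the paper invokes (its proof is suppressed as ``standard,'' and the discussion preceding the lemma sets up exactly your local picture with $M_i$ defined by the coaction and glued using that equivariant morphisms preserve weights). Nothing further is needed.
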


\begin{proof}
 This is standard and a proof is suppressed.
\end{proof}

\begin{corollary} \label{corollary: decomposition on the fixed locus derived version}
 Let $X$ be a BB stratum and let $\mathcal E$ be a bounded complex of coherent $\mathbb{G}_m$-equivariant sheaves on the fixed locus $X_0$. Then there is a functorial decomposition of the complex
 \begin{displaymath}
  \mathcal E \cong \bigoplus_{i \in \Z} \mathcal E_i. 
 \end{displaymath}
 This descends to the derived category, $\dbcoh{[X_0/\mathbb{G}_m]}$.
\end{corollary}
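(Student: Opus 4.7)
The plan is to apply Lemma~\ref{lemma: decomposition on the fixed locus} termwise to the complex and use exactness of the weight-$i$ functor to propagate the decomposition first to complexes and then to the derived category.

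First, for each cohomological degree $n$, Lemma~\ref{lemma: decomposition on the fixed locus} gives a functorial decomposition $\mathcal E^n \cong \bigoplus_{i \in \Z} \mathcal E^n_i$. Applying functoriality to the differential $d^n : \mathcal E^n \to \mathcal E^{n+1}$ shows that $d^n$ is block-diagonal with respect to the two decompositions, so $d^n = \bigoplus_i d^n_i$ with $d^n_i : \mathcal E^n_i \to \mathcal E^{n+1}_i$. Assembling these over $n$ yields a decomposition $\mathcal E \cong \bigoplus_{i \in \Z} \mathcal E_i$ in the category of bounded complexes of coherent $\mathbb{G}_m$-equivariant sheaves on $X_0$, and the decomposition is functorial in $\mathcal E$ because it arises termwise from a functorial decomposition.

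To descend to $\dbcoh{[X_0/\mathbb{G}_m]}$, I would invoke the exactness of $(-)_i$ stated in Lemma~\ref{lemma: decomposition on the fixed locus}. Exactness implies that the termwise functor $\mathcal E \mapsto \mathcal E_i$ on complexes sends quasi-isomorphisms to quasi-isomorphisms, so it descends to a well-defined functor on the bounded derived category. The natural inclusions $\mathcal E_i \hookrightarrow \mathcal E$ then exhibit the desired direct sum decomposition at the level of $\dbcoh{[X_0/\mathbb{G}_m]}$.

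The one bookkeeping point that deserves care, and which I view as the only mild obstacle, is that $\bigoplus_{i \in \Z} \mathcal E_i$ actually lies in $\dbcoh{[X_0/\mathbb{G}_m]}$ rather than in some larger unbounded or merely quasi-coherent category. Since $\mathbb{G}_m$ acts trivially on $X_0$, an equivariant coherent sheaf on $X_0$ is a $\Z$-graded coherent sheaf, and local finite generation forces all but finitely many weight pieces to vanish on each quasi-compact open. Combined with the boundedness of $\mathcal E$ in cohomological degree, this ensures that in each degree the decomposition involves only finitely many nonzero summands, so the right-hand side is genuinely an object of $\dbcoh{[X_0/\mathbb{G}_m]}$.
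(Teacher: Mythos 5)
Your proposal is correct and is exactly the argument the paper has in mind; the paper simply states that the corollary follows immediately from Lemma~\ref{lemma: decomposition on the fixed locus} and suppresses the termwise/exactness details you spell out. Your closing remark about local finiteness of the weight pieces is a sensible bookkeeping check that the paper leaves implicit.
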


\begin{proof}
 This follows immediately from Lemma~\ref{lemma: decomposition on the fixed locus}. 
\end{proof}

\begin{definition} \label{definition: weight decomposition}
 For a subset $I \subseteq \Z$, we say that a complex $\mathcal E$ from $\dbcoh{[X_0/\mathbb{G}_m]}$ has \textbf{weights concentrated in $I$} if $(\mathcal H^p (\mathcal E))_i = 0$ for all $p \in \Z$ and $i \not \in I$.
\end{definition}

Now, we turn our attention to $\mathbb{G}_m$-equivariant sheaves on $X$ itself. Let $j: X_0 \to X$ be the inclusion. 

\begin{definition}
 Let $\mathcal E$ be an object $\dbcoh{[X/\mathbb{G}_m]}$ and let $I \subseteq \Z$. We say that $\mathcal E$ has \textbf{weights concentrated in $I$} if $\mathbf{L}j^* \mathcal E$ has weights concentrated in $I$. 
\end{definition}

Next, we want to give a procedure to truncate the weights. We first again go back to the local case with $X = \op{Spec} R[x_1,\ldots,x_n]$ and the $x_i$'s having positive weight. Let $M$ be a $\mathbb{G}_m$-equivariant module over $R[x_1,\ldots,x_n]$. We can still consider 
\begin{displaymath}
 M_a : = \{ m \in M \mid \sigma(m) = m \otimes u^a \}.
\end{displaymath}
However, this is no longer a submodule of $M$ as multiplication by $x_i$ will raise the weight. But,
\begin{displaymath}
 M_{\geq a} := \bigoplus_{j \geq a} M_j
\end{displaymath}
is a submodule of $M$ and inherits a natural $\mathbb{G}_m$-equivariant structure. The assignment $M \mapsto M_{\geq a}$ is functorial with respect to $\mathbb{G}_m$-equivariant morphisms so gives a exact functor
\begin{displaymath}
 \tau_{\geq a} : \op{coh} [X/\mathbb{G}_m] \to \op{coh} [X/\mathbb{G}_m]
\end{displaymath}
which, of course, descends to the derived category
\begin{displaymath}
 \tau_{\geq a}: \dbcoh{[X/\mathbb{G}_m]} \to \dbcoh{[X/\mathbb{G}_m]}. 
\end{displaymath}

Now let us consider the global situation. Since we have a $\mathbb{G}_m$-invariant cover of $X$ of the form $\op{Spec} R[x_1,\ldots,x_n]$ with $x_i$ having positive weights and $R$ have zero weights, we can glue this construction to get
\begin{align*}
 \tau_{\geq a}: \dbcoh{[X/\mathbb{G}_m]} & \to \dbcoh{[X/\mathbb{G}_m]} \\
 \mathcal E & \to \mathcal E_{\geq a}.
\end{align*}

\begin{definition}
 Let $a \in \Z$. For a bounded complex of coherent $\mathbb{G}_m$-equivariant sheaves on $X$, $\mathcal E$, one calls $\mathcal E_{\geq a}$ a \textbf{weight truncation} of $\mathcal E$. 
\end{definition}

\begin{lemma} \label{lemma: truncation of weights actually truncates weights}
 If $\mathcal E$ has weights concentrated in $I$, then $\tau_{\geq a} \mathcal E$ has weights concentrated in $I \cap [a, \infty)$. Moreover, if $\mathcal E$ has weights concentrated in $[a, \infty)$, then there is a natural quasi-isomorphism $\mathcal E \cong \mathcal E_{\geq a}$. 
\end{lemma}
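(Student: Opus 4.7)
The plan is to reduce to the local model $X = \op{Spec} A$ with $A = R[x_1, \ldots, x_n]$ where $R$ sits in weight zero and each $x_i$ has positive weight $w_i$. In this chart, $\mathbb{G}_m$-equivariant coherent sheaves on $X$ are finitely generated graded $A$-modules, the inclusion $j \colon X_0 \hookrightarrow X$ is dual to the surjection $A \twoheadrightarrow R = A/(x_1, \ldots, x_n)$, and $\mathbf{L}j^*$ is computed by tensoring with the Koszul resolution $K_\bullet \to R$, whose $p$-th term $K_p = \bigwedge^p \bigl(\bigoplus_i A e_i\bigr)$ has each $e_i$ of weight $w_i > 0$. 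Since the functor $M \mapsto M_{\geq a}$ is exact on the abelian category of equivariant coherent sheaves, standard truncation triangles in the derived category reduce both claims to the case of a single graded module $M$.

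For the first claim, the $[a,\infty)$-part of the conclusion is immediate from the Koszul description: the weight-$w$ piece of $M_{\geq a} \otimes_A K_p$ is a sum of $(M_{\geq a})_{w - \sum_{i \in T} w_i}$ over $p$-subsets $T \subseteq \{1,\ldots,n\}$, and this vanishes unless $w \geq a + \sum_{i \in T} w_i \geq a$. To refine the weights to $I$, I use the short exact sequence
\[
0 \to M_{\geq a} \to M \to M/M_{\geq a} \to 0
\]
and the long exact sequence of cohomology sheaves of $\mathbf{L}j^*$: the hypothesis controls the weights of $\mathbf{L}j^* M$, and a weight-filtration analysis on the Koszul complex of $M/M_{\geq a}$ controls the contributions arising from the connecting maps.

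For the second claim, suppose $\mathbf{L}j^* M$ has weights in $[a, \infty)$. Applying $-\otimes_A R$ (i.e.\ $\mathcal H^0 \circ \mathbf{L}j^*$) to the same short exact sequence yields a surjection $M \otimes_A R \twoheadrightarrow (M/M_{\geq a}) \otimes_A R$ whose source has weights $\geq a$ by assumption, while the target is supported in weights strictly less than $a$ because $M/M_{\geq a}$ has graded weights in $(-\infty, a)$. Hence $(M/M_{\geq a}) \otimes_A R = 0$, so $M/M_{\geq a} = (x_1, \ldots, x_n)\,(M/M_{\geq a})$. Since a finitely generated graded $A$-module is bounded below in weight, the graded Nakayama lemma yields $M/M_{\geq a} = 0$, and the natural map $M_{\geq a} \hookrightarrow M$ is an isomorphism. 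For a bounded complex the quasi-isomorphism $\mathcal E \cong \mathcal E_{\geq a}$ is then reassembled cohomology-sheaf-by-cohomology-sheaf using the exactness of $\tau_{\geq a}$. The main obstacle is the weight-refinement step in the first claim: one must carefully reconcile the upward weight shifts introduced by the Koszul differentials with the weight hypothesis on $\mathbf{L}j^* \mathcal E$, and a globalization argument is then needed to pass from the local affine charts back to $X$, which is formal once the local statement is established.
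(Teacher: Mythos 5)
Your Koszul computation for the containment of the weights of $\tau_{\geq a}\mathcal E$ in $[a,\infty)$ is correct, and your graded Nakayama argument for the second claim is essentially right; the one point you should not wave away is the reduction "cohomology-sheaf-by-cohomology-sheaf": the hypothesis is on the complex $\mathbf{L}j^*\mathcal E$, and you must argue that each $\mathcal H^q(\mathcal E)$ separately has module weights $\geq a$. This does hold, but it needs a lowest-weight/no-cancellation argument: the minimal weight occurring in $\mathbf{L}j^*\mathcal E$ equals the minimal module weight among the $\mathcal H^q(\mathcal E)$, because in the hypercohomology spectral sequence the terms $\mathcal H^q(\mathcal E)\otimes_A K_p$ with $p\geq 1$ contribute only strictly larger weights, so the minimal-weight classes in $\mathcal H^q(\mathcal E)\otimes_A R$ cannot be killed. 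Spell that out before reassembling.

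The genuine gap is the step you yourself flag: the refinement from $[a,\infty)$ to $I\cap[a,\infty)$ is deferred to an unspecified "weight-filtration analysis of the connecting maps," and that is the entire content of the first claim. Moreover, no such analysis can close it in the stated generality, because $\mathbf{L}j^*$ does not commute with weight truncation: truncation can push fiber weights upward out of $I$. Concretely, take $X=\op{Spec} k[x]$ with $x$ of weight $1$ and $\mathcal E=\mathcal O_X$ with the trivial equivariant structure. Then $\mathbf{L}j^*\mathcal E$ is concentrated in weight $0$, so $I=\{0\}$, while $\tau_{\geq 1}\mathcal E=(x)$ is free on a generator of weight $1$, so $\mathbf{L}j^*\tau_{\geq 1}\mathcal E$ is concentrated in weight $1\notin I\cap[1,\infty)=\emptyset$. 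The intersection statement only holds when $I\subseteq[a,\infty)$, in which case it reduces to your second claim ($\tau_{\geq a}\mathcal E\simeq\mathcal E$). For comparison, the paper's own one-line proof rests on the asserted isomorphism $(\mathcal E_{\geq a})|_{X_0}\cong(\mathcal E|_{X_0})_{\geq a}$ for a bounded complex of locally free sheaves, which the same rank-one example contradicts for the (derived) restriction $\mathbf{L}j^*$ used in the definition of weights; it becomes a tautology only if one reads "weights" as module weights (i.e.\ via $\pi_*$) rather than fiber weights. So your Koszul picture is the accurate one: what it establishes, namely the $[a,\infty)$ containment and the second claim, is what can honestly be proved here, and you should not expect to recover the intersection with $I$ by refining the connecting-map analysis.
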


\begin{proof}
 We can check this computation locally and assume that $\mathcal E$ is a bounded complex of locally-free sheaves. Then, one sees that there is a natural isomorphism
 \begin{displaymath}
  (\mathcal E_{\geq a})|_{X_0} \cong (\mathcal E|_{X_0})_{\geq a}.
 \end{displaymath}
 Looking at the left-hand side we see that the complex has weights in $[a, \infty) \cap I$. 
\end{proof}

Now we level up and consider an appropriate type of groupoid in BB strata. Let $X^1 \overset{s}{\underset{t}\rightrightarrows} X^0$ be a groupoid scheme with $s$, $t$ smooth and $X^1,X^0$ smooth and quasi-projective. In general, we shall suppress the additional data packaged in a groupoid scheme, including in the next statement. Assume we have a commutative diagram
\begin{center}
 \begin{tikzpicture}[description/.style={fill=white,inner sep=2pt}]
 \matrix (m) [matrix of math nodes, row sep=2em, column sep=1.5em, text height=1.5ex, text depth=0.25ex]
 {  \mathbb{G}_m \times X^0 & X^0 \\
    X^1 & X^0. \\ };
 \path[->,font=\scriptsize]
  ([yshift=2pt]m-1-1.east) edge node [above] {$\pi$} ([yshift=2pt]m-1-2.west)
  ([yshift=-2pt]m-1-1.east) edge node [below] {$\sigma$} ([yshift=-2pt]m-1-2.west)
  (m-1-1) edge node [left] {$l$} (m-2-1)
  ([yshift=2pt]m-2-1.east) edge node [above] {$s$} ([yshift=2pt]m-2-2.west)
  ([yshift=-2pt]m-2-1.east) edge node [below] {$t$} ([yshift=-2pt]m-2-2.west)
  (m-1-2) edge node [right] {$=$} (m-2-2)
 ;
 \end{tikzpicture}
\end{center}
of groupoid schemes with $l$ a closed embedding and $\sigma$ an action. Then, we can define a morphism 
\begin{align*}
 A: \mathbb{G}_m \times X^1 & \to X^1 \\
 (\alpha, x_1) & \mapsto l(\alpha,t(x_1)) \cdot x_1 \cdot l(\alpha^{-1},\sigma(\alpha,s(x_1)))
\end{align*}
where the central dot is notation for the multiplication $m: X_1 \times_{s,X^0,t} X^1 \to X^1$. 

\begin{lemma} \label{lemma: adjoint action of Gm}
 The morphism $A$ defines an action of $\mathbb{G}_m$ on $X^1$ making both $s$ and $t$ equivariant. 
\end{lemma}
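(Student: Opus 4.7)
The plan is to verify the two axioms for $A$ to be a group action (identity and associativity), together with the $s$- and $t$-equivariance statements, using only the assumption that $l$ is a morphism of groupoid schemes. Conceptually, $A$ is the adjoint action of the $\mathbb{G}_m$-family of arrows $l(\mathbb{G}_m \times X^0) \subset X^1$ by conjugation in the groupoid, so the verification boils down to bookkeeping with source, target, identities, and composition—all of which $l$ respects.

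First I would trace sources and targets. Since $s = \pi$ and $t = \sigma$ on the upper groupoid and $l$ is a groupoid morphism, $l(\alpha, x_0) \in X^1$ is an arrow $x_0 \to \sigma(\alpha, x_0)$. A direct check then shows that the three-fold product defining $A(\alpha, x_1)$ is well-defined in the fibered product $X^1 \times_{s, X^0, t} X^1$: the rightmost factor $l(\alpha^{-1}, \sigma(\alpha, s(x_1)))$ runs $\sigma(\alpha, s(x_1)) \to s(x_1)$, composes with $x_1 \colon s(x_1) \to t(x_1)$, which in turn composes with $l(\alpha, t(x_1)) \colon t(x_1) \to \sigma(\alpha, t(x_1))$. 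Hence $A(\alpha, x_1)$ is an arrow $\sigma(\alpha, s(x_1)) \to \sigma(\alpha, t(x_1))$, which is exactly the statement $s \circ A = \sigma \circ (\id_{\mathbb{G}_m} \times s)$ and $t \circ A = \sigma \circ (\id_{\mathbb{G}_m} \times t)$.

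For the identity axiom, $(1, x_0)$ is the identity arrow at $x_0$ in $\mathbb{G}_m \times X^0$, so $l(1, x_0) = 1_{x_0}$ in $X^1$ since $l$ preserves identities. Thus $A(1, x_1) = 1_{t(x_1)} \cdot x_1 \cdot 1_{s(x_1)} = x_1$. For associativity, I would expand $A(\alpha, A(\beta, x_1))$ into the five-fold product
\[
 l(\alpha, \sigma(\beta, t(x_1))) \cdot l(\beta, t(x_1)) \cdot x_1 \cdot l(\beta^{-1}, \sigma(\beta, s(x_1))) \cdot l(\alpha^{-1}, \sigma(\alpha\beta, s(x_1)))
\]
and collapse the outer pairs using the groupoid identities
\[
 (\alpha, \sigma(\beta, x_0)) \cdot (\beta, x_0) = (\alpha\beta, x_0), \qquad (\beta^{-1}, \sigma(\beta, x_0)) \cdot (\alpha^{-1}, \sigma(\alpha\beta, x_0)) = ((\alpha\beta)^{-1}, \sigma(\alpha\beta, x_0))
\]
in $\mathbb{G}_m \times X^0$, taken at $x_0 = t(x_1)$ and $x_0 = s(x_1)$ respectively. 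Since $l$ preserves composition, applying $l$ to these collapses the five-fold product to $l(\alpha\beta, t(x_1)) \cdot x_1 \cdot l((\alpha\beta)^{-1}, \sigma(\alpha\beta, s(x_1))) = A(\alpha\beta, x_1)$.

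The main obstacle is simply managing this bookkeeping—keeping straight source versus target and the convention for the order of composition, and matching each invocation of a groupoid identity to the exact form required. With the point-level argument in place, the statement follows functorially on $T$-points, and because every ingredient of $A$ is a morphism of schemes, $A$ itself is a morphism.
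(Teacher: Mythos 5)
Your verification is correct and is exactly the routine source/target, identity, and associativity bookkeeping that the paper declares ``straightforward to verify'' and suppresses. Nothing further is needed.
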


\begin{proof}
 This is straightforward to verify so the details are suppressed. 
\end{proof}

\begin{definition} \label{definition: BB groupoid}
 If we have a commutative diagram of groupoids as above, the action $A$ is the called the \textbf{adjoint} of $\mathbb{G}_m$ on $X^1$. We say that a groupoid scheme $X^{\bullet}$ is a \textbf{stacky BB stratum} if $A$ extends to a morphism $\mathbb{A}^1 \times X^1 \to X^1$ and $(X^0)^{\mathbb{G}_m}$ is connected. Similarly, we say the associated stack $[X^0/X^1]$ is a stacky BB stratum if $X^{\bullet}$ is such. 
 
 We can pass to the fixed loci of the $\mathbb{G}_m$ actions on $X^{\bullet}$ to get another groupoid scheme $(X^1)^{\mathbb{G}_m} \rightrightarrows (X^0)^{\mathbb{G}_m}$ which we call the \textbf{fixed substack} and denoted by $[X^0/X^1]_0$. Taking the limit as $\alpha \to 0$ in $\mathbb{G}_m$ we get an induced projection denoted by $\pi: [X^0/X^1] \to [X^0/X^1]_0$. 
\end{definition}

\begin{remark}
 Note that if $A$ extends to a morphism $\mathbb{A}^1 \times X^1 \to X^1$ then $X^1$ and $X^0$ are unions of BB strata. For simplicity of exposition, we require the connectedness of $(X^0)^{\mathbb{G}_m}$. The disconnected case can be handled using the same arguments with minor modifications. 
\end{remark}

\begin{lemma} \label{lemma: truncation well-defined operation on stacky BB-strata}
 Let $X^{\bullet}$ be a stacky BB stratum and let $\tau_{\geq l}$ be the trunction functor on $\dbcoh{[X^0/\mathbb{G}_m]}$ as previously defined. Then, $\tau_{\geq l}$ descends to an endofunctor
 \begin{displaymath}
  \tau_{\geq l} : \dbcoh{ X^{\bullet} } \to \dbcoh{ X^{\bullet} }. 
 \end{displaymath}
 Furthermore, the weight decomposition on $\dbcoh{[X^0_0/\mathbb{G}_m]}$ descends to $\dbcoh{[X^0_0/X^1_0]}$. 
\end{lemma}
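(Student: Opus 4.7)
The plan is to verify that the truncation functor $\tau_{\geq l}$, already defined as an endofunctor of $\dbcoh{[X^0/\mathbb{G}_m]}$, is compatible with the additional groupoid structure packaged in $X^{\bullet}$. An object of $\dbcoh{X^{\bullet}}$ is an $X^1$-equivariant complex on $X^0$, whose underlying object is in particular $\mathbb{G}_m$-equivariant via the subgroupoid $l$; so it suffices to produce an $X^1$-equivariant structure on $\tau_{\geq l}\mathcal E$ from one on $\mathcal E$. This reduces to showing that $\tau_{\geq l}$ commutes with the pullbacks $s^{\ast}$ and $t^{\ast}$ up to natural isomorphism.

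First I would record that, by Lemma~\ref{lemma: adjoint action of Gm} together with the remark following Definition~\ref{definition: BB groupoid}, the scheme $X^1$ is a union of BB strata under the adjoint $\mathbb{G}_m$-action, and both $s$ and $t$ are $\mathbb{G}_m$-equivariant morphisms of such. The key step is then the following local assertion: for a $\mathbb{G}_m$-equivariant morphism $f \colon Y \to X$ of BB strata, there is a natural isomorphism $f^{\ast} \circ \tau_{\geq l} \cong \tau_{\geq l} \circ f^{\ast}$. This is checked on an invariant affine cover $\op{Spec} R[x_1,\ldots,x_n]$ of $X$ and its preimage in $Y$: pullback along an equivariant ring map preserves the weight decomposition of a graded module, and the truncation $(-)_{\geq l}$ is defined through that decomposition, so the two operations commute on modules, and on complexes using that $s,t$ are smooth so that $f^{\ast} = \mathbf{L}f^{\ast}$.

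Granting this, for an object $(\mathcal E, \phi)$ of $\dbcoh{X^{\bullet}}$ with descent isomorphism $\phi \colon s^{\ast}\mathcal E \to t^{\ast}\mathcal E$, functoriality of $\tau_{\geq l}$ produces a morphism
\begin{displaymath}
 \tau_{\geq l}(\phi) \colon s^{\ast} \tau_{\geq l}\mathcal E \to t^{\ast} \tau_{\geq l}\mathcal E,
\end{displaymath}
and applying $\tau_{\geq l}$ to the cocycle diagram for $\phi$ on $X^1 \times_{s,X^0,t} X^1$ yields the cocycle diagram for $\tau_{\geq l}(\phi)$. Hence $\tau_{\geq l}$ descends to an endofunctor on $\dbcoh{X^{\bullet}}$. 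For the second assertion, the same argument applies to the restricted source and target $s,t \colon X^1_0 \to X^0_0$: these remain equivariant, with the adjoint $\mathbb{G}_m$-action trivialized on the fixed loci, so the weight decomposition of Lemma~\ref{lemma: decomposition on the fixed locus} for each object in $\dbcoh{[X^0_0/\mathbb{G}_m]}$ is respected by $s^{\ast}$ and $t^{\ast}$ and therefore descends summand-by-summand to $\dbcoh{[X^0_0/X^1_0]}$.

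The main obstacle is the local compatibility of pullback with weight decomposition used in the second paragraph: while conceptually unsurprising, it requires setting up invariant local charts on $X^0$ and $X^1$ carefully enough that the weight grading on a coaction module matches under $f^{\ast}$, and verifying this at the level of complexes rather than individual modules. Once this is in hand, the descent of both the truncation functor and of the weight decomposition is a routine consequence of the functoriality and exactness of the local constructions recorded in Lemma~\ref{lemma: decomposition on the fixed locus} and Lemma~\ref{lemma: truncation of weights actually truncates weights}.
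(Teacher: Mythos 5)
Your reduction of the lemma to ``$\tau_{\geq l}$ commutes with $s^{*}$ and $t^{*}$, then apply $\tau_{\geq l}$ to the descent isomorphism $\phi$'' skips exactly the step that carries the content of the paper's proof. To form $\tau_{\geq l}(\phi)$ you need $\phi\colon s^{*}\mathcal E\to t^{*}\mathcal E$ to be a morphism of $\mathbb{G}_m$-equivariant sheaves on $X^{1}$ for the adjoint action, and this is never verified: a priori $\phi$ is only an isomorphism of quasi-coherent sheaves satisfying the cocycle condition, and its compatibility with the weight structure is precisely what must be proven. In the paper this is done locally: writing $X^{1}=\op{Spec}S$, $X^{0}=\op{Spec}R$, the cocycle condition is combined with the fact that the adjoint coaction $\tilde A$ lands in the subalgebra spanned by $u_1^{i}u_2^{j}$ with $i-j\geq 0$ (this is where the extension of $A$ over $\mathbb{A}^1$, i.e.\ the stacky BB hypothesis, actually enters) to conclude $\theta(M_j)\subset M_{\geq j}\otimes_{R,s}S$: the descent isomorphism can only \emph{raise} weights, hence restricts to an isomorphism between the subsheaves $t^{*}(\tau_{\geq l}\mathcal E)$ and $s^{*}(\tau_{\geq l}\mathcal E)$. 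Your argument never uses this positivity, yet the statement depends on it --- $\phi$ does not in general preserve weights, which is why $\tau_{\geq l}$ descends while the downward truncation and the full weight decomposition do not (away from the fixed substack); no purely formal ``functoriality plus base change'' argument can see this asymmetry.

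Moreover, your auxiliary claim that $f^{*}\tau_{\geq l}\cong\tau_{\geq l}f^{*}$ for every $\mathbb{G}_m$-equivariant morphism of BB strata is false as stated: take $f=\pi\colon X_0^{+}\to X_0$, e.g.\ $\mathbb{A}^1\to\op{Spec}k$ with a weight-one coordinate $x$; then $\pi^{*}\tau_{\geq 1}\mathcal O_{X_0}=0$ while $\tau_{\geq 1}\pi^{*}\mathcal O_{X_0}=(x)\neq 0$, because pullback mixes the weights of the module with the positive weights of the fibre directions. So even that step would require an argument specific to $s$ and $t$, not the general assertion you make. The same gap recurs in your treatment of the fixed substack: the issue is not whether $s^{*}$ and $t^{*}$ respect the decomposition of Lemma~\ref{lemma: decomposition on the fixed locus}, but whether the descent isomorphism does; the paper proves this by rerunning the local cocycle computation with the adjoint action trivial, which forces the descent isomorphism to preserve (not merely raise) weights.
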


\begin{proof}
 Recall that a quasi-coherent sheaf on $X^{\bullet}$ is a pair $(\mathcal E, \theta)$ with $\mathcal E \in \op{Qcoh}(X^0)$ and isomorphism $\theta: s^* \mathcal E \overset{\sim}{\to} t^* \mathcal E$ satisfying an appropriate cocycle condition and identity condition. Note that any sheaf on $X^{\bullet}$ carries a $\mathbb{G}_m$-equivariant structure by pulling back along $l$, so truncation is well-defined on $\mathcal E$. For any coherent sheaf $\mathcal E$ on $X^{\bullet}$, we will show that
 \begin{displaymath}
  \theta(t^* (\tau_{\geq l} \mathcal E)) \subset s^*(\tau_{\geq l} \mathcal E). 
 \end{displaymath}
 This suffices to show that $\tau_{\geq l}$ descends as $\theta^{-1} = i^*\theta$ where $i: X^1 \to X^1$ is the inverse over $X^0$.
 
 This question is local so we may assume that $X^1 = \op{Spec} S$ and $X^0 = \op{Spec} R$. Let $M$ be the module corresponding to $\mathcal E$ so that $\theta : M \otimes_{R,t} S \to M \otimes_{R,s} S$ is an isomorphism. As we will need to use it, we now recall the cocycle condition in this local situation. We have a diagram
 \begin{center}
 \begin{tikzpicture}[description/.style={fill=white,inner sep=2pt}]
 \matrix (m) [matrix of math nodes, row sep=2em, column sep=1.5em, text height=1.5ex, text depth=0.25ex]
 { M \otimes_{R,t} S \otimes_{s,R,t} S & M \otimes_{R,t} S \otimes_{s,R,t} S \\
   S \otimes_{s,R,t} S \otimes_{s,R} M & S \otimes_{s,R,t} S \otimes_{t,R} M. \\ };
 \path[->,font=\scriptsize]
  (m-1-1) edge node [above] {$\theta \otimes_R t$} (m-1-2)
  (m-1-2) edge node [right] {$\sim$} (m-2-2)
  (m-2-2) edge node [below] {$s \otimes_R \theta$} (m-2-1)
  (m-2-1) edge node [left] {$\sim$} (m-1-1)
 ;
 \end{tikzpicture}
 \end{center}
 which gives us an isomorphism
 \begin{displaymath}
  M \otimes_{R,t} S \otimes_{s,R,t} S \overset{ (s \otimes_R \theta) \circ (\theta \otimes_R t) }{\to} M \otimes_{R,t} S \otimes_{s,R,t} S.
 \end{displaymath}
 We can also tensor $\theta$ over $S$ with $m : S \to S \otimes_{s,R,t} S$ to get another isomorphism. The cocycle condition is 
 \begin{displaymath}
  (s \otimes_R \theta) \circ (\theta \otimes_R t) = \theta \otimes_S m.
 \end{displaymath}

 Now, we wish to check that $\theta(M_i \otimes_R S) \subset M_{\geq i} \otimes_R S$. We do this as follows. First, since $A: S \to S[u,u^{-1}]$ has image in $S[u]$, we must have that the image of
 \begin{displaymath}
  (l \otimes 1 \otimes l) \circ (m \otimes 1) \circ m := \tilde{A}: S \to R[u_1,u_1^{-1}] \otimes_{\pi, R, t} S \otimes_{s, R, \sigma} R[u_2,u_2^{-1}]
 \end{displaymath}
 lies in the $S$-subalgebra generated by $u_1^i u_2^j$ with $i - j \geq 0$. Since $\theta$ satisfies the cocyle condition, we can factor
 \begin{displaymath}
  \theta \otimes_S \tilde{A} = (1 \otimes_R \theta \otimes_R 1) \circ (\overline{\theta} \otimes_R 1 \otimes_R 1) \circ (1 \otimes_R \theta \otimes_R 1) \circ (1 \otimes_R 1 \otimes_R \overline{\theta}).
 \end{displaymath}
 Applying this to $m_j \in M_j$, we have 
 \begin{displaymath}
  (\theta \otimes_S \tilde{A})(m_j) = \sum_i \theta(\theta(m_j)_i)) u_1^iu_2^j
 \end{displaymath}
 where
 \begin{displaymath}
  \theta(m_j) = \sum_i \theta(m_j)_i u_1^i.
 \end{displaymath}
 As $i-j \geq 0$, we see that the $\theta(m_j) \in M_{\geq j} \otimes_{R,s} S$ and $\theta$ preserves the truncation.
 
 For the final statement, repeating the previous argument and assuming $\mathbb{G}_m$ acts trivially on $R$ and $S$ shows that $\theta$ preserves the whole splitting via weights. 
\end{proof}

\begin{example} \label{example: BFK example}
 Let $G$ be a linear algebraic group acting on a variety $X$. Assume we have a one-parameter subgroup, $\lambda: \mathbb{G}_m \to G$, and choice of connected component of the fixed locus, $Z_{\lambda}^0$. Then, we have the BB stratum $Z_{\lambda}$ and its orbit $S_{\lambda} := G \cdot Z_{\lambda}$. Let 
 \begin{displaymath}
  P(\lambda) := \{ g \in G \mid \lim_{\alpha \to 0} \lambda(\alpha) g \lambda(\alpha)^{-1} \text{ exists }\}.
 \end{displaymath}
 There is an induced action of $P(\lambda)$ on $Z_{\lambda}$. In general, $G \overset{P(\lambda)}{\times} Z_{\lambda}$ is a resolution of singularities of $S_{\lambda}$. If we assume this map is an isomorphism, then we get what is called \textbf{an elementary stratum} in the language of \cite{BFKGIT}. Since
 \begin{displaymath}
  [G \overset{P(\lambda)}{\times} Z_{\lambda} / G ] \cong [Z_{\lambda} / P(\lambda) ]
 \end{displaymath}
 and the groupoid,
 \begin{displaymath}
  P(\lambda) \times Z_{\lambda} \overset{\pi}{\underset{\sigma}\rightrightarrows} Z_{\lambda}
 \end{displaymath}
 is a stacky BB stratum, we see that $[S_{\lambda}/G]$ is a stacky BB stratum. The simplest case is $G = \mathbb{G}_m$. 
\end{example}

\subsection{Removing stacky BB strata and comparing derived categories}

Let $\mathcal X$ be a smooth Artin stack of finite type over $k$.

\begin{definition} \label{definition: weights along stacky fixed locus}
 Let $i: \mathcal Z \to \mathcal X$ be a smooth closed substack that is also a stacky BB stratum and let $l: \mathcal Z_0 \to \mathcal X$ be the closed immersion of the fixed substack. Let $\mathcal E$ be a bounded complex of coherent sheaves $\mathcal X$ and let $I \subseteq \Z$. We say that $\mathcal E$ has \textbf{weights concentrated in $I$} if $\mathbf{L}l^* \mathcal E \in \dbcoh{\mathcal Z_0}$ has weights concentrated in $I$.
\end{definition}

\begin{definition} \label{definition: windows}
 Let $\mathcal Z$ be a stacky BB stratum in $\mathcal X$ and let $I \subseteq \Z$. The \textbf{$I$-window} associated to $\mathcal Z$ is the full subcategory whose objects have weights concentrated in $I$. We denote this subcategory by $\weezer(I,\mathcal Z_0)$. 
\end{definition}

\begin{lemma} \label{lemma: fully-faithful}
 Assume that the weights of the conormal sheaf of $\mathcal Z$ in $\mathcal X$ are all strictly negative and let $t_{\mathcal Z}$ be the weight of the relative canonical sheaf $\omega_{\mathcal Z \mid \mathcal X}$. Set $\mathcal U := \mathcal X \setminus \mathcal Z$ and $j: \mathcal U \to \mathcal X$ be the inclusion. Then, the functor
 \begin{displaymath}
  j^*: \weezer(I,\mathcal Z_0) \to \dbcoh{\mathcal U}
 \end{displaymath}
 is fully-faithful whenever $I$ is contained in a closed interval of length $< -t_{\mathcal Z}$. 
\end{lemma}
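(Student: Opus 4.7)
The plan is to use a standard localization argument to reduce full-faithfulness of $j^*$ to a vanishing of local cohomology on $\mathcal X$, and then reduce that vanishing to a weight comparison on the fixed substack $\mathcal Z_0$. Specifically, for $\mathcal E, \mathcal F \in \weezer(I,\mathcal Z_0)$, full-faithfulness amounts to the natural map $\op{RHom}_{\mathcal X}(\mathcal E, \mathcal F) \to \op{RHom}_{\mathcal U}(j^*\mathcal E, j^*\mathcal F)$ being an isomorphism. Using the localization triangle $\mathbf{R}\underline{\Gamma}_{\mathcal Z}\mathcal F \to \mathcal F \to \mathbf{R}j_* j^*\mathcal F$ together with the adjunction $j^* \dashv \mathbf{R}j_*$, this reduces to showing that $\op{RHom}_{\mathcal X}(\mathcal E, \mathbf{R}\underline{\Gamma}_{\mathcal Z}\mathcal F) = 0$.

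Next, I would filter $\mathbf{R}\underline{\Gamma}_{\mathcal Z}\mathcal F$ by powers of the ideal sheaf of $\mathcal Z$, using the identification $\mathcal{I}_{\mathcal Z}^n/\mathcal{I}_{\mathcal Z}^{n+1} \cong i_*\op{Sym}^n \mathcal N^\vee_{\mathcal Z/\mathcal X}$ for the smooth embedding $i: \mathcal Z \to \mathcal X$. Grothendieck duality then identifies the graded pieces, and the resulting spectral sequence reduces the problem to showing, for each $n \geq 0$,
\[
\op{RHom}_{\mathcal Z}\bigl(Li^*\mathcal E,\ Li^*\mathcal F \otimes \op{Sym}^n \mathcal N_{\mathcal Z/\mathcal X} \otimes \det\mathcal N_{\mathcal Z/\mathcal X}[-r]\bigr) = 0,
\]
where $r$ denotes the codimension of $\mathcal Z$ in $\mathcal X$.

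To establish each such vanishing, I would push the computation further down to $\mathcal Z_0$ along the BB retraction $\pi: \mathcal Z \to \mathcal Z_0$. Because $\pi$ is a stacky affine bundle with strictly positive $\mathbb{G}_m$-weights on the fibers, $\mathbf{R}\pi_*\mathcal O_{\mathcal Z}$ carries only non-negative weights, and an analogous filtration along the relative conormal $\mathcal N^\vee_{\mathcal Z_0/\mathcal Z}$ allows each $\op{RHom}_{\mathcal Z}$ to be computed in terms of $\op{RHom}$-groups on $\mathcal Z_0$ twisted by non-negatively weighted pieces. The weights then become tight: by the window hypothesis, $\mathbf{L}l^*\mathcal E$ and $\mathbf{L}l^*\mathcal F$ have weights in $I$; the additional twists by $\op{Sym}^n \mathcal N|_{\mathcal Z_0}$ (weights $\geq 0$) and $\det\mathcal N|_{\mathcal Z_0}$ (weight exactly $-t_{\mathcal Z}>0$) push the target's weights to at least $\min(I) + (-t_{\mathcal Z})$. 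Since the length of $I$ is strictly less than $-t_{\mathcal Z}$, we have $\max(I) < \min(I) + (-t_{\mathcal Z})$, so the weight supports of source and target are disjoint, and the orthogonality of weight pieces on $\dbcoh{\mathcal Z_0}$ from Lemma~\ref{lemma: truncation well-defined operation on stacky BB-strata} forces the vanishing.

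The main obstacle is the last reduction: justifying the passage from $\op{RHom}_{\mathcal Z}$ down to $\op{RHom}_{\mathcal Z_0}$ purely at the level of the stacky BB stratum structure, without invoking a global quotient presentation of $\mathcal X$. The other ingredients (localization triangle, Koszul-type ideal filtration, Grothendieck duality) are standard; the technical novelty lies in the weight-preservation properties of $\mathbf{R}\pi_*$ and the Koszul-type analysis along the fixed-substack inclusion $\mathcal Z_0 \hookrightarrow \mathcal Z$, which is precisely the intrinsic content that this section aims to develop.
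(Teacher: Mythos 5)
Your proposal is correct and follows essentially the same route as the paper: reduce to vanishing of local cohomology via the localization triangle, filter by powers of $\mathcal I_{\mathcal Z}$ so the graded pieces are $\mathbf{L}i^*(-) \otimes \op{Sym}^n(\mathcal T_{\mathcal Z \mid \mathcal X}) \otimes \omega^{-1}_{\mathcal Z \mid \mathcal X}$, push down to $\mathcal Z_0$, and conclude by comparing weights using that the length of $I$ is $< -t_{\mathcal Z}$. The only (cosmetic) differences are that the paper applies the filtration to $\mathbf{R}\mathcal Hom_{\mathcal X}(\mathcal E,\mathcal F)$ rather than to $\mathcal F$ followed by adjunction, and it obtains the weight-orthogonality on $\mathcal Z_0$ by factoring global sections through the rigidification pushforward $r_*$ (which projects onto weight $0$), a point you attribute a bit loosely to Lemma~\ref{lemma: truncation well-defined operation on stacky BB-strata}.
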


\begin{proof}
 The argument here is essentially due to Teleman \cite[Section 2]{Tel} which the author first learned of in \cite{HL12}. It amounts to descending through a few spectral sequences. For the convenience of the reader and to keep the paper self-contained, we recapitulate it in some detail. 
 
 For any two objects $\mathcal E,\mathcal F$ of $\dbcoh{\mathcal X}$ there is an exact triangle of graded vector spaces 
 \begin{displaymath}
  \op{Hom}_{\mathcal X, \mathcal Z}(\mathcal E, \mathcal F) \to \op{Hom}_{\mathcal X}(\mathcal E, \mathcal F) \overset{j^*}{\to} \op{Hom}_{\mathcal U}(j^* \mathcal E, j^* \mathcal F) 
 \end{displaymath}
 coming from applying the exact triangle of derived functors
 \begin{equation} \label{equation: local cohomology global sections}
  \mathbf{R}\Gamma_{\mathcal Z}(\mathcal X,-) \to \mathbf{R}\Gamma(\mathcal X,-) \overset{j^*}{\to} \mathbf{R}\Gamma(\mathcal U,-) 
 \end{equation}
 to $\mathbf{R}\mathcal Hom_{\mathcal X}(\mathcal E, \mathcal F)$. Therefore, a necessary and sufficient condition for $j^*$'s fully-faithfulness is the vanishing of $\op{Hom}_{\mathcal X, \mathcal Z}(\mathcal E, \mathcal F)$. 
 
 The exact triangle in Equation~\eqref{equation: local cohomology global sections} comes from applying $\mathbf{R}\Gamma(\mathcal X,-)$ to the following exact triangle 
 \begin{equation*}
  \mathcal H_{\mathcal Z} \to \op{Id} \to \mathbf{R}j_*j^*
 \end{equation*}
 of functors. Here $\mathcal H_{\mathcal Z}$ is the derived sheafy local cohomology functor. The complex $\mathcal H_{\mathcal Z}(\mathcal G)$ is not scheme-theoretically supported on $\mathcal Z$ but there is a bounded above filtration by powers of the ideal sheaf $\mathcal I_{\mathcal Z}$. The associated graded sheaves are scheme-theoretically supported on $\mathcal Z$. Furthermore, since $\mathcal Z$ is smooth, the $s$-th associated graded piece is isomorphic to $\mathbf{L}i^*\mathcal G \otimes \op{Sym}^s(\mathcal T_{\mathcal Z \mid \mathcal X}) \otimes \omega^{-1}_{\mathcal Z \mid \mathcal X}$. 
 
 We may now take global sections on $\mathcal Z$ which we may factor through two pushforwards: one by $\pi: \mathcal Z \to \mathcal Z_0$ and one by the rigidification map $r: \mathcal Z_0 \to \mathcal Z^{\mathbb{G}_m}_0$ \cite[Theorem 5.15]{ACV}. The pushforward $r_*: \dbcoh{\mathcal Z_0} \to \dbcoh{\mathcal Z_0^{\mathbb{G}_m}}$ projects onto the weight $0$ component of the weight decomposition. Thus, to establish vanishing, it suffices to show that there is no weight $0$ component in the splitting of 
 \begin{displaymath}
  \pi_* \left( \mathbf{L}i^*\mathcal G \otimes \op{Sym}^s(\mathcal T_{\mathcal Z \mid \mathcal X}) \otimes \omega^{-1}_{\mathcal Z \mid \mathcal X} \right).
 \end{displaymath}
 Since we can resolve anything using pullbacks from $\mathcal Z_0$, we can use the projection formula to get 
 \begin{displaymath}
  \pi_* \left( \mathbf{L}i^*\mathcal G \otimes \op{Sym}^s(\mathcal T_{\mathcal Z \mid \mathcal X}) \otimes \omega^{-1}_{\mathcal Z \mid \mathcal X} \right) \cong \mathbf{L}l^* \mathcal G \otimes \op{Sym}^s(\mathcal T_{\mathcal Z \mid \mathcal X})|_{\mathcal Z_0} \otimes \omega^{-1}_{\mathcal Z \mid \mathcal X}|_{\mathcal Z_0} \otimes \pi_* \mathcal O_{\mathcal Z}.
 \end{displaymath}
 One notices that the terms 
 \begin{displaymath}
  \op{Sym}^s(\mathcal T_{\mathcal Z \mid \mathcal X})|_{\mathcal Z_0} \otimes \omega^{-1}_{\mathcal Z \mid \mathcal X}|_{\mathcal Z_0} \otimes \pi_* \mathcal O_{\mathcal Z}
 \end{displaymath}
 have weights $\geq -t_{\mathcal Z}$. When the weights of $\mathcal G$ are concentrated in $(t_{\mathcal Z}, \infty)$, we get a trivial weight zero component and the desired vanishing. Now, setting $\mathcal G = \mathbf{R}\mathcal Hom_{\mathcal X}(\mathcal E, \mathcal F)$ for $\mathcal E, \mathcal F \in \weezer(I,\mathcal Z_0)$, we see that our assumption on $I$ exactly implies this.
\end{proof}

\begin{lemma} \label{lemma: essentially surjective}
 Assume that the weights of the conormal sheaf of $\mathcal Z$ in $\mathcal X$ are all strictly negative and let $t_{\mathcal Z}$ be the weight of the relative canonical sheaf $\omega_{\mathcal Z \mid \mathcal X}$. Set $\mathcal U := \mathcal X \setminus \mathcal Z$ and $j: \mathcal U \to \mathcal X$ be the inclusion. Then, the functor
 \begin{displaymath}
  j^*: \weezer(I,\mathcal Z_0) \to \dbcoh{\mathcal U}
 \end{displaymath}
 is essentially surjective whenever $I$ contains a closed interval of length $\geq -t_{\mathcal Z} - 1$. 
\end{lemma}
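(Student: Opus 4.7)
The plan is to prove essential surjectivity by a standard ``window surgery'' argument: extend the given object $\mathcal F \in \dbcoh{\mathcal U}$ arbitrarily to an object on $\mathcal X$, then iteratively modify the extension by exact triangles whose third term is scheme-theoretically supported on $\mathcal Z$ (so restriction to $\mathcal U$ is preserved) until all weights on $\mathcal Z_0$ lie in $I$.

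First I would extend: for a smooth, finite-type Artin stack $\mathcal X$ and an open immersion $j: \mathcal U \to \mathcal X$, every coherent sheaf on $\mathcal U$ extends to a coherent sheaf on $\mathcal X$ (take a coherent subsheaf of $j_*j^*\mathcal F$ that agrees with $\mathcal F$ on $\mathcal U$). For a bounded complex one handles each term and glues via the distinguished triangles of stupid truncations, producing some $\tilde{\mathcal F} \in \dbcoh{\mathcal X}$ with $j^*\tilde{\mathcal F} \cong \mathcal F$. One could also deduce this from Lemma~\ref{lemma: fully-faithful} applied to a larger window together with a dévissage.

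Next, the surgery. Assume $I \supseteq [a, a+N]$ with $N \geq -t_{\mathcal Z} - 1$. By Corollary~\ref{corollary: decomposition on the fixed locus derived version} we may write $\mathbf Ll^*\tilde{\mathcal F} \cong \bigoplus_w \mathcal E_w$; let $[m, M]$ denote the range of weights that appear. If $M > a+N$, the goal is to produce an exact triangle
\begin{displaymath}
 \mathcal K \to \tilde{\mathcal F} \to \tilde{\mathcal F}' \to \mathcal K[1]
\end{displaymath}
with $\mathcal K$ set-theoretically supported on $\mathcal Z$ (so $j^*\tilde{\mathcal F}' \cong j^*\tilde{\mathcal F} \cong \mathcal F$) and with the weight range of $\mathbf Ll^*\tilde{\mathcal F}'$ contained in $[\min(m,a), M-1]$. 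Concretely, I would extract the top-weight piece by applying the truncation $\tau_{\geq M}$ from Lemma~\ref{lemma: truncation well-defined operation on stacky BB-strata} to $\mathbf Li^*\tilde{\mathcal F}$, set $\mathcal K := \mathbf Ri_* \tau_{\geq M}(\mathbf Li^*\tilde{\mathcal F})$, and use the counit $\mathcal K \to \tilde{\mathcal F}$ supplied by the composition $\mathbf Ri_*\tau_{\geq M}\mathbf Li^* \to \mathbf Ri_*\mathbf Li^* \to \op{Id}$. The key computation is to restrict $\mathcal K$ back to $\mathcal Z_0$: the Koszul filtration coming from the self-intersection formula shows that $\mathbf Ll^*\mathcal K$ has weights concentrated in $[M + t_{\mathcal Z}, M]$, because the lowest Koszul layer twists by $\omega_{\mathcal Z|\mathcal X}$ (weight $t_{\mathcal Z}$) while the top layer is unshifted. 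The cone $\tilde{\mathcal F}'$ therefore has top weight $\leq M-1$, and its new weights are bounded below by $M + t_{\mathcal Z}$, which by $M > a+N$ and $N \geq -t_{\mathcal Z}-1$ satisfies $M + t_{\mathcal Z} \geq a$. Thus no weight drops below $a$. A symmetric construction using $\tau_{\leq m}$ and the unit $\op{Id} \to \mathbf Ri_*\mathbf Li^*$ handles the case $m < a$, with the analogous inequality $m - t_{\mathcal Z} \leq a+N$ ensuring that no weight rises above $a+N$. Iterating finitely many times (the weight range is always bounded and strictly shrinks) produces the desired object in $\weezer(I, \mathcal Z_0)$.

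The main obstacle I anticipate is the verification that the surgery exact triangle has the advertised weight behavior: one must carefully track the Koszul/Bar filtration on $\mathbf Ll^*\mathbf Ri_*\mathbf Li^*$ and check that the weights of the successive graded pieces lie in $[t_{\mathcal Z}, 0]$ shifted by the weight of the input. This is the place where the hypothesis that the conormal of $\mathcal Z$ in $\mathcal X$ has strictly negative weights is essential (it guarantees the Koszul shifts are by strictly negative amounts summing to $t_{\mathcal Z}$), and where the precise bound $N \geq -t_{\mathcal Z} - 1$ becomes sharp. The remainder of the argument, namely the inductive termination and the stability of $j^*\tilde{\mathcal F}$ under each surgery, is formal.
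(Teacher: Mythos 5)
Your overall architecture --- extend the object arbitrarily to $\mathcal X$, then iteratively strip off the extremal weight by an exact triangle whose third term is supported on $\mathcal Z$, using the Koszul filtration on $\mathbf{L}i^*i_*$ to control the weights introduced --- is exactly the paper's, and your numerical bookkeeping ($M + t_{\mathcal Z} \geq a$, finite termination) is right. The gap is in the morphism that drives the surgery: it does not exist as you construct it. You set $\mathcal K := \mathbf{R}i_*\tau_{\geq M}(\mathbf{L}i^*\tilde{\mathcal F})$ and propose a map $\mathcal K \to \tilde{\mathcal F}$ via ``$\mathbf{R}i_*\tau_{\geq M}\mathbf{L}i^* \to \mathbf{R}i_*\mathbf{L}i^* \to \op{Id}$''. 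The second arrow is not a natural transformation: for a closed immersion the adjunctions are $\mathbf{L}i^* \dashv \mathbf{R}i_* \dashv i^!$, so the available maps are the unit $\op{Id} \to \mathbf{R}i_*\mathbf{L}i^*$ and the counit $\mathbf{R}i_*i^! \to \op{Id}$. Already for $i:\{0\}\hookrightarrow \mathbb{A}^1$ one has $\op{Hom}(i_*\mathbf{L}i^*\mathcal O,\mathcal O)=\op{Hom}(\mathcal O_0,\mathcal O)=0$, so no such counit can exist. To lower the top weight you must map \emph{out of} $\tilde{\mathcal F}$: form the weight-$M$ quotient $\mathcal W$ of $\mathbf{L}i^*\tilde{\mathcal F}$ (cone on the inclusion of the truncation), compose the unit $\tilde{\mathcal F}\to \mathbf{R}i_*\mathbf{L}i^*\tilde{\mathcal F}$ with $\mathbf{R}i_*$ of the projection, and take the fiber of $\tilde{\mathcal F}\to i_*\mathcal W$; this is what the paper does. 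Your ``symmetric construction'' for the bottom weight has the same defect and, in addition, moves the weights the wrong way: the fiber of $\tilde{\mathcal F}\to i_*(\text{weight-}m\text{ quotient})$ acquires weights down to $m+t_{\mathcal Z}<m$, so it lowers rather than raises the bottom. Raising the bottom weight requires the other adjunction --- a subobject of $i^!\tilde{\mathcal F}$ mapped in via the genuine counit $i_*i^!\to\op{Id}$ --- or, equivalently, conjugating the top-weight procedure by dualization, as the paper indicates.

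A second, smaller omission: even with the corrected triangle, the conclusion that the cone has top weight $\leq M-1$ does not follow from the weight ranges $[m,M]$ and $[M+t_{\mathcal Z},M]$ alone. One must check that the map induces an isomorphism on the weight-$M$ components after applying $\mathbf{L}l^*$, using that the only weight-$M$ layer of the Koszul filtration of $\mathbf{L}i^*i_*\mathcal W$ is $\mathcal W$ itself and that restriction to $\mathcal Z_0$ followed by projection onto a weight space is exact on the relevant triangles; the paper records exactly this verification. Your explicit treatment of the extension step (which the paper leaves implicit) is fine.
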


\begin{proof}
 We may assume that $\mathcal X \not = \mathcal Z$ and we may reduce to $I$ being an interval of length $-t_{\mathcal Z} -1$. To demonstrate essential surjectivity it suffices to iteratively reduce the weights by forming exact triangles 
 \begin{displaymath}
  \mathcal E' \to \mathcal E \to \mathcal T
 \end{displaymath}
 where $\mathcal T$ is set-theoretically supported on $\mathcal Z$ and the weights of $\mathcal E'$ are concentrated in a strictly smaller interval than $\mathcal E$. There is an exact triangle 
 \begin{displaymath}
  \tau_{\geq b-1} \mathbf{L}i^* \mathcal E \to \mathbf{L}i^*\mathcal E \to \mathcal W
 \end{displaymath}
 where weights of $\mathcal E$ along $\mathcal Z$ are concentrated in $[a,b]$ with $b-a \geq -t_{\mathcal Z}$. By Lemma~\ref{lemma: truncation of weights actually truncates weights}, the weights of $\tau_{\geq b-1} \mathbf{L}i^* \mathcal E$ are concentrated in $[a,b-1]$. Then, the only weight of $\mathcal W$ is $b$. Consider $i_*\mathcal W$ as an object of $\dbcoh{\mathcal X}$. Let us check that the weights of $i_*\mathcal W$ lie in $[b+t_{\mathcal Z},b]$. We must compute 
 \begin{displaymath}
  \mathbf{L}i^* i_* \mathcal W.
 \end{displaymath}
 This has cohomology sheaves isomorphic to $\mathcal W \otimes \bigwedge^*(\Omega_{\mathcal Z \mid \mathcal X})$ which by assumption has weights in $[b+t_{\mathcal Z},b]$. Note that the only contribution to weight $b$ is $\mathcal W$ itself and the map 
 \begin{displaymath}
  \mathbf{L}i^*\mathcal E|_{X_0} \to \mathcal W|_{X_0}
 \end{displaymath}
 induces an isomorphism on the weight $b$ portion of the decomposition. Now we define $\mathcal E'$ to be the cone over the map $\mathcal E \to i_* \mathcal W$. Restricting the exact triangle to $X_0$ and remembering that computing weight spaces is an exact functor, we see that the weights of $\mathcal E'$ along $\mathcal Z$ are concentrated in $[a,b-1]$. We may also raise the weights by conjugating the previous procedure by dualization. Combining the two procedures, we can move the weights of any complex into $I$ up to $\mathcal Z$-torsion. 
\end{proof}

\begin{corollary} \label{corollary: window equivalence}
 Assume that the weights of the conormal sheaf of $\mathcal Z$ in $\mathcal X$ are all strictly negative and let $t_{\mathcal Z}$ be the weight of the relative canonical sheaf $\omega_{\mathcal Z \mid \mathcal X}$. Set $\mathcal U := \mathcal X \setminus \mathcal Z$ and $j: \mathcal U \to \mathcal X$ be the inclusion. Then, the functor
 \begin{displaymath}
  j^*: \weezer(I,\mathcal Z_0) \to \dbcoh{\mathcal U}
 \end{displaymath}
 is an equivalence whenever $I$ is interval of length $-t_{\mathcal Z} - 1$.
\end{corollary}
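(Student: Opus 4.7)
The plan is to obtain the equivalence by simply combining the two preceding lemmas, so the only real work is to verify that the numerical hypothesis on $I$ simultaneously triggers both. I would take $I$ to be a closed interval of length exactly $-t_{\mathcal Z}-1$ and check both conditions. For fully-faithfulness via Lemma~\ref{lemma: fully-faithful}, I need $I$ to be contained in a closed interval of length strictly less than $-t_{\mathcal Z}$; since $-t_{\mathcal Z}-1 < -t_{\mathcal Z}$, the interval $I$ itself witnesses this. For essential surjectivity via Lemma~\ref{lemma: essentially surjective}, I need $I$ to contain a closed subinterval of length at least $-t_{\mathcal Z}-1$, and again $I$ itself works.

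Combining the two applications, $j^* \colon \weezer(I,\mathcal Z_0) \to \dbcoh{\mathcal U}$ is both fully faithful and essentially surjective, hence an equivalence of (triangulated) categories. There is essentially no obstacle here: the numerical thresholds in Lemmas~\ref{lemma: fully-faithful} and~\ref{lemma: essentially surjective} have been arranged precisely so that they overlap on the nose at intervals of length $-t_{\mathcal Z}-1$, leaving only the trivial inequality $-t_{\mathcal Z}-1 < -t_{\mathcal Z}$ to record. I would write this up as a single short paragraph citing both lemmas.
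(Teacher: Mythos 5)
Your proposal is correct and matches the paper's proof, which simply cites Lemmas~\ref{lemma: fully-faithful} and~\ref{lemma: essentially surjective}; your explicit check that an interval of length $-t_{\mathcal Z}-1$ satisfies both numerical hypotheses (being contained in a closed interval of length $<-t_{\mathcal Z}$ and containing one of length $\geq -t_{\mathcal Z}-1$) is exactly the content left implicit there.
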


\begin{proof}
 This is an immediate consequence of Lemmas~\ref{lemma: fully-faithful} and \ref{lemma: essentially surjective}. 
\end{proof}

\begin{definition} \label{definition: complementary pieces}
 Let $s \in \Z$. Denote by $\mathcal C_s(\mathcal Z)$ the full subcategory of $\dbcoh{\mathcal Z_0}$ consisting of objects with weight $s$. 
\end{definition}

\begin{lemma} \label{lemma: complementary pieces pull back ok}
 Assume that the weights of the conormal sheaf of $\mathcal Z$ in $\mathcal X$ are all strictly negative. The functor 
 \begin{align*}
  \Upsilon_s : \mathcal C_s(\mathcal Z) & \to \dbcoh{\mathcal X} \\
  \mathcal E & \mapsto i_* \pi^* \mathcal E
 \end{align*}
 is fully-faithful. 
\end{lemma}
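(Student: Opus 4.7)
The plan is to compute $\op{RHom}_{\mathcal X}(i_* \pi^* \mathcal E, i_* \pi^* \mathcal F)$ directly by chaining the adjunctions $(\mathbf{L}i^*, i_*)$ and $(\pi^*, \mathbf{R}\pi_*)$ with the projection formula, and then to argue that only the ``constant'' term of the resulting spectral sequence survives, by a weight count. The outcome should be a natural isomorphism with $\op{RHom}_{\mathcal Z_0}(\mathcal E, \mathcal F)$, which is what fully-faithfulness demands.

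First, I would apply the adjunction for $i$ to obtain
\begin{displaymath}
\op{RHom}_{\mathcal X}(i_* \pi^* \mathcal E, i_* \pi^* \mathcal F) \cong \op{RHom}_{\mathcal Z}(\mathbf{L}i^* i_* \pi^* \mathcal E, \pi^* \mathcal F).
\end{displaymath}
Because $i \colon \mathcal Z \to \mathcal X$ is a regular closed immersion of smooth stacks, the standard Koszul-type computation identifies the cohomology of $\mathbf{L}i^* i_* \pi^* \mathcal E$ with $\pi^*\mathcal E \otimes \bigwedge^p \mathcal N^\vee_{\mathcal Z \mid \mathcal X}$ in degree $-p$. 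This produces a convergent spectral sequence whose $E_1$-page has entries $\op{Ext}^q_{\mathcal Z}(\pi^* \mathcal E \otimes \bigwedge^p \mathcal N^\vee, \pi^* \mathcal F)$.

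Next, I would rewrite each such entry using the $(\pi^*, \mathbf{R}\pi_*)$ adjunction together with the projection formula (valid since $\pi^*\mathcal F$ is a pullback and $\pi$ is an affine, smooth, $\mathbb{G}_m$-equivariant fibration). The result is
\begin{displaymath}
\op{Ext}^q_{\mathcal Z}(\pi^*\mathcal E \otimes \bigwedge^p \mathcal N^\vee, \pi^* \mathcal F) \cong \op{Ext}^q_{\mathcal Z_0}\bigl(\mathcal E, \mathcal F \otimes \pi_* \textstyle\bigwedge^p \mathcal N\bigr).
\end{displaymath}

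The crucial and most delicate step is the weight count. Since the conormal sheaf $\mathcal N^\vee$ has strictly negative weights by hypothesis, its dual $\mathcal N$ has strictly positive weights along $\mathcal Z_0$. Combined with the fact that $\pi_* \mathcal O_{\mathcal Z}$ has non-negative weights with weight-zero piece exactly $\mathcal O_{\mathcal Z_0}$ (working étale-locally as in the proof of Lemma~\ref{lemma: truncation well-defined operation on stacky BB-strata}), one checks that $\pi_* \bigwedge^p \mathcal N$ has strictly positive weights for $p \geq 1$ and agrees with $\mathcal O_{\mathcal Z_0}$ in weight zero when $p = 0$. Since $\mathcal E$ and $\mathcal F$ both have pure weight $s$, morphisms in $\dbcoh{\mathcal Z_0}$ between them and a tensor with a positive-weight sheaf must vanish, so every $p \geq 1$ term of the $E_1$-page dies; the $p = 0$ term contributes $\op{Ext}^q_{\mathcal Z_0}(\mathcal E, \mathcal F)$. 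The spectral sequence therefore degenerates and delivers the desired isomorphism. I expect the main obstacle to be verifying cleanly that the weight bookkeeping survives the descent from the local BB picture to the groupoid presentation of $\mathcal Z$, but this is controlled by Lemma~\ref{lemma: truncation well-defined operation on stacky BB-strata} and its proof.
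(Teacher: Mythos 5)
Your proposal is correct and follows essentially the same route as the paper: adjunction for $i_*$, the Koszul filtration of $\mathbf{L}i^* i_*$ by (co)normal powers, adjunction and projection formula for $\pi$, and the same weight count isolating the $p=0$ term. The only differences are cosmetic (you phrase the vanishing via a spectral sequence where the paper uses the cone of the counit $\mathbf{L}i^* i_* \to \op{Id}$, and you write the graded pieces as $\bigwedge^p \mathcal N^{\vee}$ where the paper uses symmetric-power notation).
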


\begin{proof}
 We use the standard adjunctions. We have
 \begin{displaymath}
  \op{Hom}_{\mathcal X}( \Upsilon_s \mathcal E, \Upsilon_s \mathcal F) \cong \op{Hom}_{\mathcal Z}( \mathbf{L}i^*i_* \pi^* \mathcal E, \pi^* \mathcal F).
 \end{displaymath}
 The cohomology sheaves of $\mathbf{L}i^*i_* \pi^* \mathcal E$ are isomorphic to $\pi^* \mathcal E \otimes \op{Sym}^*(\mathcal T_{\mathcal Z \mid \mathcal X})$. Thus, there is a natural map 
 \begin{equation} \label{equation: a map in complementary pieces proof}
  \op{Hom}_{\mathcal Z}( \pi^* \mathcal E, \pi^* \mathcal F) \to \op{Hom}_{\mathcal X}( \Upsilon_s \mathcal E, \Upsilon_s \mathcal F).
 \end{equation}
 We first check that this is isomorphism. The cone of $\mathbf{L}i^*i_* \pi^* \mathcal E \to \pi^* \mathcal E$ has cohomology sheaves $\pi^* \mathcal E \otimes \op{Sym}^{\geq 1}(\Omega_{\mathcal Z \mid \mathcal X})$ which have weights $< s$. Let us show that
 \begin{displaymath}
  \op{Hom}_{\mathcal Z}(\pi^* \mathcal E \otimes \op{Sym}^{\geq 1}(\Omega_{\mathcal Z \mid \mathcal X}), \pi^* \mathcal F[s]) = 0
 \end{displaymath}
 for any $s$. This vanishing combined with a spectral sequence argument gives that the map in Equation~\eqref{equation: a map in complementary pieces proof} is an isomorphism. We have
 \begin{align*}
  \op{Hom}_{\mathcal Z}(\pi^* \mathcal E \otimes \op{Sym}^{\geq 1}(\Omega_{\mathcal Z \mid \mathcal X}), \pi^* \mathcal F[s]) & \cong \op{Hom}_{\mathcal Z_0}(\mathcal E \otimes \op{Sym}^{\geq 1}(\Omega_{\mathcal Z \mid \mathcal X}), \mathcal F \otimes \op{Sym}^*(\Omega_{\mathcal Z_0 \mid \mathcal Z})[s]) \\
  & \cong \op{Hom}_{\mathcal Z_0}(\mathcal E, \mathcal F \otimes \op{Sym}^*(\Omega_{\mathcal Z_0 \mid \mathcal Z}) \otimes \op{Sym}^{\geq 1}(\mathcal T_{\mathcal Z \mid \mathcal X})[s]).
 \end{align*}
 We can again factor through pushforward to the rigidification $\mathcal Z_0^{\mathbb{G}_m}$. In this case, the above Hom-space is zero as the weights of the right hand side are concentrated in $(s,\infty)$. 
 
 Next we have
 \begin{align*}
  \op{Hom}_{\mathcal Z}( \pi^* \mathcal E, \pi^* \mathcal F) & \cong \op{Hom}_{\mathcal Z_0}( \mathcal E, \mathcal F \otimes \op{Sym}^*(\Omega_{\mathcal Z_0 \mid \mathcal Z})).
 \end{align*}
 The only piece of $\mathcal F \otimes \op{Sym}^*(\Omega_{\mathcal Z_0 \mid \mathcal Z})$ in weight $s$ is $\mathcal F$. Thus, 
 \begin{displaymath}
  \op{Hom}_{\mathcal Z_0}( \mathcal E, \mathcal F \otimes \op{Sym}^*(\Omega_{\mathcal Z_0 \mid \mathcal Z})) \cong \op{Hom}_{\mathcal Z_0}( \mathcal E, \mathcal F)
 \end{displaymath}
 which gives fully-faithfulness.
\end{proof}

\begin{lemma} \label{lemma: semi-orthogonal decomposition of bigger windows}
 Assume that the weights of the conormal sheaf of $\mathcal Z$ in $\mathcal X$ are all strictly negative and let $t_{\mathcal Z}$ be the weight of the relative canonical sheaf $\omega_{\mathcal Z \mid \mathcal X}$. Assume that $v-u \geq -t_{\mathcal Z}$. There is a semi-orthogonal decomposition
 \begin{displaymath}
  \weezer([u,v],\mathcal Z_0) = \langle \Upsilon_{v}, \weezer([u,v-1],\mathcal Z_0) \rangle.
 \end{displaymath}
\end{lemma}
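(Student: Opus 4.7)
The plan is to verify the three standard criteria for the semi-orthogonal decomposition: the containment $\Upsilon_v(\mathcal C_v(\mathcal Z)) \subseteq \weezer([u,v],\mathcal Z_0)$, the Hom-vanishing between the two pieces, and the existence of the defining exact triangle for every object of $\weezer([u,v],\mathcal Z_0)$.

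For the containment, I compute $\mathbf{L}l^*\Upsilon_v\mathcal E = \mathbf{L}l_0^*\mathbf{L}i^*i_*\pi^*\mathcal E$ for $\mathcal E \in \mathcal C_v(\mathcal Z)$. Since $i$ is a regular closed immersion with conormal $\Omega_{\mathcal Z|\mathcal X}$, the cohomology sheaves of $\mathbf{L}i^*i_*\pi^*\mathcal E$ are $\pi^*\mathcal E \otimes \bigwedge^k(\Omega_{\mathcal Z|\mathcal X})$, which further restrict along $l_0$ to $\mathcal E \otimes \bigwedge^k(\Omega_{\mathcal Z|\mathcal X})|_{\mathcal Z_0}$. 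By hypothesis the conormal weights are strictly negative and sum to $t_{\mathcal Z}$, so these sheaves have weights in $[v+t_{\mathcal Z}, v]$; the inequality $v-u \geq -t_{\mathcal Z}$ gives $v+t_{\mathcal Z} \geq u$, placing them in $[u,v]$.

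For the semi-orthogonality, I show that $\op{Hom}_{\mathcal X}(\mathcal F, \Upsilon_v \mathcal E) = 0$ whenever $\mathcal F \in \weezer([u,v-1],\mathcal Z_0)$ and $\mathcal E \in \mathcal C_v(\mathcal Z)$. By the adjunction $(\mathbf{L}i^*, i_*)$, this equals $\op{Hom}_{\mathcal Z}(\mathbf{L}i^*\mathcal F, \pi^*\mathcal E)$. Following the rigidification argument from the proof of Lemma~\ref{lemma: fully-faithful}, I push $\mathbf{R}\mathcal Hom(\mathbf{L}i^*\mathcal F, \pi^*\mathcal E)$ along $r \circ \pi$ to $\mathcal Z_0^{\mathbb{G}_m}$ and extract the weight-zero part. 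Since $\mathbf{L}i^*\mathcal F$ has $\mathcal Z_0$-weights in $[u,v-1]$ and $\pi^*\mathcal E$ has weight $v$, the surviving weight contributions all lie in $[1, v-u]$, so the weight-zero piece vanishes.

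For the existence of the triangle, given $\mathcal G \in \weezer([u,v],\mathcal Z_0)$, set $\mathcal E := (\mathbf{L}l^*\mathcal G)_v \in \mathcal C_v(\mathcal Z)$, which is well-defined via Corollary~\ref{corollary: decomposition on the fixed locus derived version} and Lemma~\ref{lemma: truncation well-defined operation on stacky BB-strata}. I construct the morphism $\mathcal G \to \Upsilon_v \mathcal E$ by exhibiting a left adjoint $L$ to the functor $\Upsilon_v: \mathcal C_v(\mathcal Z) \to \weezer([u,v], \mathcal Z_0)$, namely $L(\mathcal G) := (\mathbf{L}l^*\mathcal G)_v$. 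The adjunction identity
\[
\op{Hom}_{\mathcal X}(\mathcal G, \Upsilon_v\mathcal E) \cong \op{Hom}_{\mathcal Z_0}(L\mathcal G, \mathcal E)
\]
is verified by the same rigidification template used in Lemma~\ref{lemma: complementary pieces pull back ok}: after the projection formula and pushdown, the weight-zero component of $\mathbf{R}\pi_*((\mathbf{L}i^*\mathcal G)^\vee) \otimes \mathcal E$ extracts precisely $(L\mathcal G)^\vee \otimes \mathcal E$, because only the weight-$(-v)$ contribution of the dual pushforward pairs non-trivially against the weight-$v$ sheaf $\mathcal E$. The unit of the adjunction then supplies the morphism $\eta: \mathcal G \to \Upsilon_v L\mathcal G = \Upsilon_v\mathcal E$. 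Since $L\circ \Upsilon_v \cong \op{id}_{\mathcal C_v(\mathcal Z)}$ (the weight-$v$ part of the containment step) and $\Upsilon_v$ is fully faithful (Lemma~\ref{lemma: complementary pieces pull back ok}), the triangle identities for the adjunction force $L\eta = \op{id}_{L\mathcal G}$. Consequently, restricting the triangle $\mathcal F \to \mathcal G \to \Upsilon_v\mathcal E$ to $\mathcal Z_0$ and projecting to weight $v$ yields the identity on $\mathcal E$; thus $\mathbf{L}l^*\mathcal F$ has no weight-$v$ component, placing $\mathcal F$ in $\weezer([u,v-1], \mathcal Z_0)$.

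The main obstacle is the adjunction verification in the final step. The subtlety is that $\mathbf{L}i^*\mathcal G$ on $\mathcal Z$ may carry weights well beyond the interval $[u,v]$ of its $\mathcal Z_0$-restriction — the affine-bundle directions can introduce arbitrarily large positive weight contributions — and one must check that these extra weights drop out of the weight-zero pairing against the weight-$v$ sheaf $\mathcal E$. This should follow from the projection formula for $\pi$ together with the fact that $\pi_*\mathcal O_{\mathcal Z}$ carries only non-negative weights, ensuring that the weight-$(-v)$ component of $\mathbf{R}\pi_*((\mathbf{L}i^*\mathcal G)^\vee)$ is governed by $(L\mathcal G)^\vee$ exactly as claimed.
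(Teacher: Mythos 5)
Your three steps (containment of the image of $\Upsilon_v$ in the big window, semi-orthogonality, and the existence of the decomposition triangle) are all sound, and the first two coincide with what the paper does -- the paper leaves the containment implicit in its proof of Lemma~\ref{lemma: essentially surjective} and refers semi-orthogonality back to the weight computation of Lemma~\ref{lemma: fully-faithful}, exactly the rigidification-plus-positive-weights argument you run. Where you genuinely diverge is the generation step. The paper simply reuses the truncation triangle $\mathcal E' \to \mathcal E \to i_*\mathcal W$ already built in the proof of Lemma~\ref{lemma: essentially surjective}: the map $\mathcal E \to i_*\mathcal W$ is the unit of $(\mathbf{L}i^*, i_*)$ composed with the quotient onto the top-weight piece, which exists because $\tau_{\geq \cdot}$ descends to the stack (Lemma~\ref{lemma: truncation well-defined operation on stacky BB-strata}); no adjunction for $\Upsilon_v$ is ever invoked. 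You instead manufacture the same morphism as the unit of a left adjoint $L = (\mathbf{L}l^*(-))_v$ to $\Upsilon_v$, which forces you to prove the identity $\op{Hom}_{\mathcal X}(\mathcal G, \Upsilon_v\mathcal E) \cong \op{Hom}_{\mathcal Z_0}(L\mathcal G, \mathcal E)$ -- a statement strictly stronger than anything the paper needs, and, as you note, the delicate point of your argument (it does hold, by resolving $\mathbf{L}i^*\mathcal G$ by pullbacks from $\mathcal Z_0$ with generators in weights $\leq v$ and using that $\pi_*\mathcal O_{\mathcal Z}$ has non-negative weights, but this deserves to be written out at least as carefully as Lemma~\ref{lemma: complementary pieces pull back ok}). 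What your route buys is a cleaner formal package: $\Upsilon_v$ becomes left-admissible with an explicit projection functor, and the membership $\mathcal F \in \weezer([u,v-1],\mathcal Z_0)$ drops out of the triangle identities rather than from tracking the truncation. What the paper's route buys is brevity and independence from any adjunction for $\Upsilon_v$. One small slip: in your semi-orthogonality step the surviving weights lie in $[1,\infty)$ rather than $[1, v-u]$, since $\pi_*\mathcal O_{\mathcal Z}$ contributes unbounded non-negative weights; the conclusion (vanishing of the weight-zero part) is unaffected.
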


\begin{proof}
 Take $\mathcal E \in \weezer([u,v],\mathcal Z_0)$ and consider the exact triangle from the proof of Lemma~\ref{lemma: essentially surjective}
 \begin{displaymath}
  \mathcal E' \to \mathcal E \to i_* \mathcal W. 
 \end{displaymath}
 It is clear that from the definition that $i_* \mathcal W$ lies in the image of $\Upsilon_v$ and we saw already that $\mathcal E'$ lies in $\weezer([u,v-1],\mathcal Z_0)$. Thus, the two subcategories generate $\weezer([u,v],\mathcal Z_0)$. It remains to check semi-orthogonality. This follows as in the proof of Lemma~\ref{lemma: fully-faithful}. 
\end{proof}

\begin{remark} \label{remark: singular case}
 The reader should observe that everything goes through if $\mathcal X$ is singular but smooth along $\mathcal Z$. 
\end{remark}

\begin{definition} \label{definition: elementary strata and elementary wall crossing}
 Let $\mathcal X$ be a smooth algebraic stack of finite-type over $k$. A stacky BB stratum $\mathcal Z$ in $\mathcal X$ is called an \textbf{elementary stratum} if the weights of $\Omega_{\mathcal Z \mid \mathcal X}$ along $\mathcal Z$ are strictly negative.  
 
 A pair of elementary strata $\mathcal Z_-$ and $\mathcal Z_+$ is called an \textbf{elementary wall crossing} if $\mathcal Z_{-,0} = \mathcal Z_{+,0}$ and the two embeddings of $\mathbb{G}_m$ into the automorphisms of $\mathcal Z_{\pm,0}$ differ by inversion. 
\end{definition}

\begin{theorem} \label{theorem: elementary wall crossing}
 Assume we have elementary wall crossing, $\mathcal Z_-,\mathcal Z_+$. Fix $d \in \Z$. 
 \begin{enumerate}
  \item If $t_{\mathcal Z_+} < t_{\mathcal Z_-}$, then there are fully-faithful functors,
  \begin{displaymath}
   \Phi^+_d: \dbcoh{\mathcal U_-} \to \dbcoh{\mathcal U_+},
  \end{displaymath}
  and, for $-t_{\mathcal Z_-} + d \leq j \leq -t_{\mathcal Z_+} + d - 1$,
  \begin{displaymath}
   \widetilde{\Upsilon}_j^-: \mathcal C_j(\mathcal Z_-) \to \dbcoh{\mathcal U_+},
  \end{displaymath}
  and a semi-orthogonal decomposition,
  \begin{displaymath}
   \dbcoh{\mathcal U_+} = \langle \widetilde{\Upsilon}^-_{-t_{\mathcal Z_-}+d}, \ldots, \widetilde{\Upsilon}^-_{-t_{\mathcal Z_+}+d-1}, \Phi^+_d \rangle.
  \end{displaymath}
  \item If $t_{\mathcal Z_+} = t_{\mathcal Z_-}$, then there is an exact equivalence,
  \begin{displaymath}
   \Phi^+_d: \dbcoh{\mathcal U_-} \to \dbcoh{\mathcal U_+}.
  \end{displaymath}
  \item If $t_{\mathcal Z_+} > t_{\mathcal Z_-}$, then there are fully-faithful functors,
  \begin{displaymath}
   \Phi^-_d: \dbcoh{\mathcal U_+} \to \dbcoh{\mathcal U_-},
  \end{displaymath}
  and, for $-t_{\mathcal Z_+} + d \leq j \leq -t_{\mathcal Z_-} + d -1$,
  \begin{displaymath}
   \widetilde{\Upsilon}_j^+: \mathcal C_j(\mathcal Z_+) \to \dbcoh{\mathcal U_-},
  \end{displaymath}
  and a semi-orthogonal decomposition,
  \begin{displaymath}
   \dbcoh{\mathcal U_-} = \langle \widetilde{\Upsilon}^+_{-t_{\mathcal Z_+}+d}, \ldots, \widetilde{\Upsilon}^+_{-t_{\mathcal Z_-}+d-1}, \Phi^-_d \rangle.
  \end{displaymath}
 \end{enumerate}
\end{theorem}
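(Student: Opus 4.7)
The plan is to assemble the statement from Corollary \ref{corollary: window equivalence}, Lemma \ref{lemma: semi-orthogonal decomposition of bigger windows}, and Lemma \ref{lemma: complementary pieces pull back ok}, using the inversion relating $\mathcal Z_-$ and $\mathcal Z_+$ to translate between windows on the two sides. The crucial observation to establish first is that, because the two embeddings of $\mathbb{G}_m$ into $\op{Aut}(\mathcal Z_0)$ differ by inversion, the weight grading on $\dbcoh{\mathcal Z_0}$ used to detect weights along $\mathcal Z_+$ is opposite to that used for $\mathcal Z_-$. Consequently, for every $I \subseteq \Z$,
\[
\weezer(I, \mathcal Z_{+,0}) = \weezer(-I, \mathcal Z_{-,0})
\]
as full subcategories of $\dbcoh{\mathcal X}$. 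By Corollary \ref{corollary: window equivalence}, whenever $I_\pm$ is an interval of length $-t_{\mathcal Z_\pm} - 1$, the functor $j_\pm^*$ yields an equivalence $\weezer(I_\pm, \mathcal Z_{\pm,0}) \xrightarrow{\sim} \dbcoh{\mathcal U_\pm}$.

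For case (2), with $t_{\mathcal Z_+} = t_{\mathcal Z_-}$, the two window lengths agree. I would choose an interval $I_+$ (the parameter $d$ recording the choice) and take $I_- := -I_+$. Then the two windows coincide inside $\dbcoh{\mathcal X}$, and $\Phi_d^+ := j_+^* \circ (j_-^*)^{-1}$ is an exact equivalence.

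For case (1), with $t_{\mathcal Z_+} < t_{\mathcal Z_-}$, the $\mathcal Z_+$-window is longer than what Corollary \ref{corollary: window equivalence} provides for $\mathcal Z_-$. Fix an interval $I_+ = [u_+, v_+]$ of length $-t_{\mathcal Z_+} - 1$ and transport it to the $\mathcal Z_-$ side as $[-v_+, -u_+]$; this interval is too long for Corollary \ref{corollary: window equivalence} but satisfies $(-u_+) - (-v_+) \geq -t_{\mathcal Z_-}$, so Lemma \ref{lemma: semi-orthogonal decomposition of bigger windows} applies. Iterating that lemma to peel off top weights one at a time until the residual interval has length exactly $-t_{\mathcal Z_-} - 1$ produces
\[
\weezer([-v_+, -u_+], \mathcal Z_{-,0}) = \langle \Upsilon_{-u_+}^-, \Upsilon_{-u_+-1}^-, \ldots, \Upsilon_{w+1}^-, \weezer([-v_+, w], \mathcal Z_{-,0}) \rangle,
\]
with $w := -v_+ + (-t_{\mathcal Z_-} - 1)$. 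Applying $j_+^*$ identifies the left side with $\dbcoh{\mathcal U_+}$, and the tail window is identified with $\dbcoh{\mathcal U_-}$ via $j_-^*$. Setting $\Phi_d^+ := j_+^* \circ (j_-^*)^{-1}$ and $\widetilde{\Upsilon}_j^- := j_+^* \circ \Upsilon_j^-$ delivers the stated semi-orthogonal decomposition; fully-faithfulness of each $\widetilde{\Upsilon}_j^-$ follows from Lemma \ref{lemma: complementary pieces pull back ok} combined with the window equivalence. Matching the index ranges and the order of factors with the formulation in the theorem is a mechanical reparameterization of $d$. Case (3) is obtained by interchanging the roles of $\mathcal Z_-$ and $\mathcal Z_+$ in the previous argument.

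The main obstacle is the very first step: verifying rigorously, from the definition of a stacky BB stratum and the adjoint $\mathbb{G}_m$-actions, that inversion of the two $\mathbb{G}_m$-actions on $\mathcal Z_0$ really does negate the weight grading on $\dbcoh{\mathcal Z_0}$, so that the two flavors of windows are genuinely the same subcategory of $\dbcoh{\mathcal X}$ after the sign flip. Once that sign identification is pinned down, the rest is a formal combination of the window equivalence of Corollary \ref{corollary: window equivalence}, iterated applications of Lemma \ref{lemma: semi-orthogonal decomposition of bigger windows}, and the fully-faithful embeddings from Lemma \ref{lemma: complementary pieces pull back ok}.
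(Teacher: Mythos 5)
Your proposal is correct and follows essentially the same route as the paper: iterate Lemma~\ref{lemma: semi-orthogonal decomposition of bigger windows} to peel the larger window down to the smaller one, identify both ends with $\dbcoh{\mathcal U_\pm}$ via Corollary~\ref{corollary: window equivalence}, and define $\Phi^+_d$ and $\widetilde{\Upsilon}^-_j$ by composing with the restriction to $\mathcal U_+$, with full faithfulness of the pieces coming from Lemma~\ref{lemma: complementary pieces pull back ok}. The sign-flip identification $\weezer(I,\mathcal Z_{+,0}) = \weezer(-I,\mathcal Z_{-,0})$ that you flag as the main point to pin down is exactly what the paper uses implicitly (it writes both windows with respect to a single weight convention on the common fixed substack, where it follows at once from the definition of weights via $\mathbf{L}l^*$ and the inversion of the two $\mathbb{G}_m$-embeddings), so there is no substantive gap.
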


\begin{proof}
 This is the same argument as for the proof of \cite[Theorem 3.5.2]{BFKGIT}. Again, we recall it in some detail. Swapping the roles of $\mathcal Z_+$ and $\mathcal Z_-$ we can assume that $t_{\mathcal Z_+} \leq t_{\mathcal Z_-}$. Choose intervals $I_- \subseteq I_+$ with the diameter of $I_{\pm}$ equal to $-t_{\mathcal Z_{\pm}} - 1$ and $d := \op{min} I_- = \op{min} I_+$. From Lemma~\ref{lemma: semi-orthogonal decomposition of bigger windows}, there is a semi-orthogonal decomposition
 \begin{displaymath}
  \weezer(I_+,\mathcal Z_0) = \langle \Upsilon^-_{-t_{\mathcal Z_-}+d}, \ldots, \Upsilon^-_{-t_{\mathcal Z_+}+d-1}, \weezer(I_-,\mathcal Z_0) \rangle
 \end{displaymath}
 Using Corollary~\ref{corollary: window equivalence}, we can pull back to $\mathcal U_+$ to get 
 \begin{displaymath}
  \dbcoh{\mathcal U_+} = \langle i^*_+ \circ \Upsilon^-_{-t_{\mathcal Z_-}+d}, \ldots, i^*_+ \circ \Upsilon^-_{-t_{\mathcal Z_+}+d-1}, i^*_+\weezer(I_-,\mathcal Z_0) \rangle.
 \end{displaymath}
 Applying Corollary~\ref{corollary: window equivalence} again, we know that $i_-^*$ induces an equivalence between $\weezer(I_-,\mathcal Z_0)$ and $\dbcoh{\mathcal U_-}$. We set 
 \begin{displaymath}
  \widetilde{\Upsilon}_j^- := i^*_+  \circ \Upsilon^-_j
 \end{displaymath}
 and
 \begin{displaymath}
  \Phi^+_d := i_+^* \circ (i^*_-)^{-1}
 \end{displaymath}
 to finish.
\end{proof}

\section{Stable sheaves on rational surfaces}

In this section, we apply Theorem~\ref{theorem: elementary wall crossing} using the well-known structure of semi-stable rank two torsion-free sheaves on rational surfaces \cite{EG,FQ,MW}.

Let $S$ be a smooth complex projective surface. In this section, we will show how to apply Theorem~\ref{theorem: elementary wall crossing} to wall-crossing of Gieseker stable sheaves obtained by variation of the polarization on $S$. 

\subsection{Basics}

Let us recall the main notions of stability. Let $L$ be an ample line bundle on $S$.

\begin{definition} \label{definition: stability}
 Let $E$ be coherent sheaf on $S$. The sheaf, $E$, is \textbf{Gieseker $L$-semi-stable} \cite{Gieseker} if it is torsion-free and for any proper subsheaf $F \subsetneq  E$ one has $\overline{p}_L(F) \leq \overline{p}_L(E)$ where $\overline{p}_L$ is the reduced Hilbert polynomial associated to the embedding given by $L$. If the inequality is strict for any proper subsheaf, $E$ is \textbf{Gieseker $L$-stable}. If $E$ is not Gieseker $L$-semi-stable, then $E$ is \textbf{Gieseker $L$-unstable}. 
 
 The sheaf, $E$, is \textbf{Mumford $L$-semi-stable} \cite{Mum62,Takemoto} if it is torsion-free and for any proper subsheaf $F \subset E$ one has $\mu_L(F) \leq \mu_L(E)$ where $\mu_L$ is the $L$-slope of the sheaf. Again if the inequality is always strict, $E$ is \textbf{Mumford $L$-stable} and if $E$ is not Mumford $L$-semi-stable then it is called \textbf{Mumford $L$-unstable}. 
\end{definition}

Fix invariants $c_0,c_1,c_2$ and consider the moduli functors, $\widetilde{\mathcal M}_L(c_0,c_1,c_2)$, $\widetilde{\mathcal Mum}_L(c_0,c_1,c_2)$, given by
\begin{align*}
   X & \mapsto \{\text{iso. classes of Gieseker $L$-s.s. families with } c_0(\mathcal F_x) = c_0, c_1(\mathcal F_x) = c_1, c_2(\mathcal F_x) = c_2 \} \\
   X & \mapsto \{\text{iso. classes of Mumford $L$-s.s. families with } c_0(\mathcal F_x) = c_0, c_1(\mathcal F_x) = c_1, c_2(\mathcal F_x) = c_2 \}.
\end{align*}

The following is well-known.

\begin{lemma} \label{lemma: moduli stack is finite-type}
 The functor, $\widetilde{\mathcal M}_L(c_0,c_1,c_2)$, is an algebraic stack of finite-type over $k$. The same is true for $\widetilde{\mathcal Mum}_L(c_0,c_1,c_2)$.
\end{lemma}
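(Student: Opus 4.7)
The statement is the standard fact that moduli of semistable sheaves with fixed Chern classes form a finite-type algebraic stack, so the plan is to follow the classical Gieseker--Maruyama--Simpson approach and package the result as a quotient stack.

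First I would invoke boundedness. The set of Gieseker $L$-semistable (resp.\ Mumford $L$-semistable) torsion-free sheaves on $S$ with fixed Chern classes $c_0,c_1,c_2$ forms a bounded family; for Mumford semistability on a smooth projective surface this is a theorem of Maruyama, and for Gieseker semistability in arbitrary dimension it follows from the more general results of Simpson (and Langer). Boundedness means there exists an integer $N \gg 0$ such that for every semistable $E$ in the family, the twist $E(N)$ is globally generated and has vanishing higher cohomology, so that $h^0(E(N)) = P(N)$ where $P$ is the Hilbert polynomial determined by $c_0,c_1,c_2$ and $L$.

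Second, I would realize the moduli functor as a quotient of an open subscheme of a Quot scheme. Every semistable $E$ in the family arises as the quotient of $\mathcal O_S(-N)^{\oplus P(N)}$ after choosing a basis of $H^0(E(N))$, so we get a morphism from an open subscheme $U$ of $\op{Quot}(\mathcal O_S(-N)^{\oplus P(N)},P)$ to the moduli functor. The openness of the locus $U$ where the universal quotient is torsion-free and Gieseker (resp.\ Mumford) semistable with the right space of global sections is the standard openness-of-(semi)stability statement. Grothendieck's theorem says the Quot scheme is projective; in particular $U$ is a scheme of finite type over $k$.

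Third, I would form the quotient stack. The group $GL_{P(N)}$ acts on $U$ by change of basis of $H^0(E(N))$, and two points of $U$ give isomorphic sheaves iff they lie in the same orbit; moreover the isomorphism functor is represented by this action groupoid. Thus $\widetilde{\mathcal M}_L(c_0,c_1,c_2) \cong [U/GL_{P(N)}]$, and the same argument applied to Mumford semistability gives $\widetilde{\mathcal Mum}_L(c_0,c_1,c_2) \cong [U'/GL_{P(N)}]$ for the corresponding open subscheme $U' \subseteq \op{Quot}$. Since $U$ and $U'$ are finite-type $k$-schemes and $GL_{P(N)}$ is a smooth affine algebraic group of finite type, both quotient stacks are algebraic of finite type over $k$.

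The only real input is boundedness together with openness of semistability in families; everything else is formal. These are precisely the classical ingredients and are the two steps I would cite rather than reprove. I would then be done.
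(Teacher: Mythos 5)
Your argument is correct, but it takes a different route from the paper. You follow the classical Gieseker--Maruyama construction: use boundedness to find a uniform twist $N$, embed every semistable sheaf as a point of an open subscheme $U$ of a Quot scheme, and present the moduli functor as the global quotient stack $[U/GL_{P(N)}]$. The paper instead observes that $\widetilde{\mathcal M}_L(c_0,c_1,c_2)$ is an \emph{open substack} of the stack of all coherent sheaves on $S$, which is already known to be algebraic (citing the Stacks Project), so algebraicity is immediate from openness of semistability; boundedness is then used only to produce a smooth surjective atlas of finite type, giving the finite-type claim. Both proofs rest on exactly the same two nontrivial inputs --- boundedness and openness of (semi)stability in families --- so the mathematical content is the same. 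What your approach buys is an explicit global quotient presentation $[U/GL_{P(N)}]$, at the cost of having to verify that the isomorphism functor is represented by the $GL_{P(N)}$-action groupoid on $U$ (i.e., that two points of $U$ parametrize isomorphic sheaves exactly when they are in one orbit, and that stabilizers recover automorphism groups); this is standard but is an extra step you assert rather than prove. The paper's route avoids that verification entirely, which is in keeping with its overall theme of not relying on global quotient presentations. Either argument is acceptable here.
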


\begin{proof}
 First, $\widetilde{\mathcal M}_L(c_0,c_1,c_2)$ is open, \cite[Proposition 2.3.1]{HL}, in the stack of coherent sheaves on $S$ which is algebraic, \cite[Theorem 75.5.12]{stacks-project}. Thus, $\widetilde{\mathcal M}_L(c_0,c_1,c_2)$ is algebaic stack. There is a bounded family of $L$-semi-stable sheaves with fixed numerical invariants, \cite[Theorem 3.3.7]{HL}. Base changing the induced map to $\widetilde{\mathcal M}_L(c_0,c_1,c_2)$ gives a smooth, surjective map with source of finite-type. A similar argument shows the statement for $\widetilde{\mathcal Mum}_L(c_0,c_1,c_2)$. 
\end{proof}

Next, we address smoothness. Recall that $\mathcal E$ is Gieseker (Mumford) polystable if it is the direct sum of Gieseker (Mumford) stable sheaves.

\begin{lemma} \label{lemma: smoothness at wall points}
 Assume $K_S < 0$. Let $E$ be Gieseker $L$-polystable or simple. Then, $E$ is a smooth point of $\widetilde{\mathcal M}_L(c_0,c_1,c_2)$. A similar statement holds for $\widetilde{\mathcal Mum}_L(c_0,c_1,c_2)$. 
\end{lemma}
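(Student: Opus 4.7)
The plan is to invoke standard deformation theory for the moduli stack of coherent sheaves on $S$. The tangent space to $\widetilde{\mathcal M}_L(c_0,c_1,c_2)$ at $[E]$ is $\operatorname{Ext}^1(E,E)$ and the obstructions to infinitesimal deformations lie in $\operatorname{Ext}^2(E,E)$, so it suffices to show that $\operatorname{Ext}^2(E,E) = 0$. By Serre duality on the smooth projective surface $S$, this is equivalent to the vanishing of $\operatorname{Hom}(E, E \otimes \omega_S)$.

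The key numerical input is that $K_S < 0$ together with $L$ ample yields $K_S \cdot L < 0$. The slogan I will invoke is: if $\phi : A \to B$ is a nonzero morphism of torsion-free Mumford $L$-semistable sheaves with $\mu_L(A) > \mu_L(B)$, then the image $F := \operatorname{image}(\phi)$ satisfies $\mu_L(A) \leq \mu_L(F) \leq \mu_L(B)$ (a quotient of the Mumford semistable $A$ has slope at least $\mu_L(A)$; a subsheaf of the Mumford semistable $B$ has slope at most $\mu_L(B)$), a contradiction.

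For the polystable case, decompose $E \cong \bigoplus_i E_i^{\oplus m_i}$ with the $E_i$ pairwise non-isomorphic Gieseker $L$-stable summands, each of slope $\mu := \mu_L(E)$. Then
\[
\operatorname{Hom}(E, E \otimes \omega_S) \cong \bigoplus_{i,j} \operatorname{Hom}(E_i, E_j \otimes \omega_S)^{\oplus m_i m_j},
\]
and each summand vanishes by the slogan applied to $A = E_i$, $B = E_j \otimes \omega_S$: we have $\mu_L(A) = \mu$ while $\mu_L(B) = \mu + K_S \cdot L < \mu$. For the simple case, observe that since $[E] \in \widetilde{\mathcal M}_L$ the sheaf $E$ is Gieseker $L$-semistable, hence Mumford $L$-semistable, and apply the slogan directly to $A = E$ and $B = E \otimes \omega_S$.

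The Mumford variant $\widetilde{\mathcal Mum}_L(c_0,c_1,c_2)$ is handled verbatim, since the slope comparisons only ever invoke Mumford semistability. I expect no serious technical obstacle; the main conceptual pitfall is that smoothness of the moduli \emph{stack} (as opposed to the coarse moduli space) requires vanishing of the full $\operatorname{Ext}^2(E,E)$ rather than just its traceless part, but this is exactly what the slope argument delivers, so no refinement to the trace-free Ext is needed.
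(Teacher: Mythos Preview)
Your argument is correct and reaches the same target as the paper --- vanishing of $\operatorname{Ext}^2(E,E)$, equivalently of $\operatorname{Hom}(E,E\otimes\omega_S)$ by Serre duality --- but the mechanism is genuinely different. The paper reads ``$K_S<0$'' as $-K_S$ effective, so that there is an inclusion $\omega_S\hookrightarrow\mathcal O_S$; tensoring gives an injection $\mathcal Hom(E,E)\otimes\omega_S\hookrightarrow\mathcal Hom(E,E)$, and global sections of the left side are identified with endomorphisms of $E$ vanishing along the divisor $-K_S$. The polystable/simple hypothesis is then used to say that $\operatorname{Hom}(E,E)$ consists only of ``constant'' matrices, which cannot vanish along a nonzero divisor. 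Your route instead extracts only the numerical consequence $K_S\cdot L<0$ and runs the standard slope comparison $\mu_L(E)>\mu_L(E\otimes\omega_S)$ between Mumford semistable sheaves.

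Your approach has the pleasant feature that the $\operatorname{Ext}^2$ vanishing actually holds for \emph{every} Gieseker (or Mumford) $L$-semistable $E$, with no polystability or simplicity needed; the decomposition $E=\bigoplus E_i^{\oplus m_i}$ you perform in the polystable case is harmless but unnecessary, since the slogan applies directly to $A=E$, $B=E\otimes\omega_S$. The paper's approach, by contrast, genuinely consumes the hypothesis on $E$ to control $\operatorname{Hom}(E,E)$. One small point worth making explicit in your write-up: the image $F$ of $\phi$ is torsion-free because it sits inside the torsion-free $B$, which is what makes the quotient-slope inequality $\mu_L(F)\ge\mu_L(A)$ legitimate.
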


\begin{proof}
 Write $E = \bigoplus_{i \in I} E_i$ with each $E_i$ stable. First, since each $E_i$ is stable, or since $E$ is simple, and $S$ is proper, we have
 \begin{displaymath}
  \op{Hom}(E,E) \subset \op{M}_m(\Gamma(S,\mathcal O_S)) = M_m(k). 
 \end{displaymath}
 where $M_m(k)$ is $m \times m$-matrices in $k$.
 
 To check smoothness, it suffices to show that $\op{ext}^1(E, E) - \op{hom}(E, E) = \chi(\op{Ext}^*(E,E))$, which remains constant over $\mathcal M_L(c_0,c_1,c_2)$. It suffices, therefore, to show that $\op{Ext}^2(E, E) = 0$. From Serre duality, we have 
 \begin{displaymath}
  \op{ext}^2(E, E) = \op{h}^0( \mathcal Hom(E, E) \otimes \omega_S).
 \end{displaymath}
 Since $K_S < 0$, there is an inclusion $\omega_S \to \mathcal O_S$ and an inclusion
 \begin{displaymath}
  \mathcal Hom(E, E) \otimes \omega_S \to \mathcal Hom(E,E).
 \end{displaymath}
 Taking global sections, we see that $\op{h}^0( \mathcal Hom(E, E) \otimes \omega_S)$ counts the dimension of the subspace of global sections of $\op{Hom}(E,E)$ which vanish along $-K_S$. Since all global sections are constant over $S$, we have 
 \begin{displaymath}
  \op{h}^0( \mathcal Hom(E, E) \otimes \omega_S) = 0.
 \end{displaymath} 
 Since Mumford stability implies Gieseker stability, we get the same statement for Mumford semi-stable sheaves. 
\end{proof}

\subsection{Rank two stable sheaves}

We now restrict ourselves to the case $S$ is a rational surface and $c_0 = 2$. We recall the results of Friedman and Qin \cite{FQ}, see also \cite{EG,MW}. Let $L_+$ and $L_-$ be two ample line bundles on $S$. For a divisor $\xi$ satisfying $\xi \equiv c_1 (\op{mod} 2)$ and $c_1^2-4c_2 \leq \xi^2 \leq 0$, consider the hyperplane in $\op{Amp}(S)_{\mathbb{R}}$ given by 
\begin{displaymath}
 W^{\xi} := \{ D \mid D \cdot \xi = 0 \}.
\end{displaymath}
The hyperplane $W^{\xi}$ is called the wall associated to $\xi$. For simplicity, we shall assume that the line segment joining $L_+$ and $L_-$ intersects only a single $W^{\xi}$, determined by a unique $\xi$. The more general case, where rational multiples of $\xi$ may remain integral and define the same wall, requires minimal modification of the argument \cite{FQ}. Denote the polarization given by the intersection of the line and the hyperplane by $L_0$. We assume that $L_0$ lies in no other walls. 

We have two inclusions 
\begin{displaymath}
 \widetilde{\mathcal M}_{L_-}(c_1,c_2) \subseteq \widetilde{\mathcal Mum}_{L_0}(c_1,c_2) \supseteq \widetilde{\mathcal M}_{L_+}(c_1,c_2).
\end{displaymath}
where we change notation
\begin{displaymath}
 \widetilde{\mathcal M}_L(2,c_1,c_2) =: \widetilde{\mathcal M}_L(c_1,c_2)
\end{displaymath}
to reflect that the focus of our attention is upon rank two sheaves. Note the switch to Mumford semi-stable in the wall. Friedman and Qin study Mumford $L_0$-semi-stable sheaves of a particular form. 

\begin{definition}
 Let $Z^k(F)$ be the set  of sheaves $E$ occuring in a short exact sequence
 \begin{displaymath}
  0 \to I_{Z_1}(F) \to E \to I_{Z_2}(\Delta - F) \to 0
 \end{displaymath}
 where $c_1(\Delta) = c_1$, $\xi = 2F - \Delta$, $Z_1 \in \op{Hilb}^k(S)$, and $Z_2 \in \op{Hilb}^{l_{\xi}-k}(S)$ with 
 \begin{displaymath}
  l_{\xi} = (4c_2 - c_1^2 + \xi^2)/4.
 \end{displaymath}
 
 Denote the associated substack of $\widetilde{\mathcal Mum}_{L_0}(c_1,c_2)$ by $\widetilde{\mathcal Z}^k(F)$. 
\end{definition}

\begin{proposition}
 The substack $\widetilde{\mathcal Z}^k(F)$ is closed in $\widetilde{\mathcal Mum}_{L_0}(c_1,c_2)$ and $\widetilde{\mathcal Mum}_{L_0}(c_1,c_2)$ is smooth along $\widetilde{\mathcal Z}^k(F)$. 
\end{proposition}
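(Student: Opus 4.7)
The plan is to separate the two assertions and handle each with standard techniques.

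For closedness, I would exhibit $\widetilde{\mathcal Z}^k(F)$ as the image of a proper morphism. Over the product of Hilbert schemes $\op{Hilb}^k(S) \times \op{Hilb}^{l_{\xi} - k}(S)$, the universal ideal sheaves produce a universal pair $(I_{\mathcal Z_1}(F), I_{\mathcal Z_2}(\Delta - F))$. I would form the relative $\op{Ext}^1$ sheaf between them and pass to a compactification $\mathcal P$ (its projectivization together with a zero-section accounting for split extensions) so that $\mathcal P$ carries a universal extension sheaf of the prescribed Chern character. Pushforward of this universal sheaf defines a morphism $\mathcal P \to \widetilde{\mathcal Mum}_{L_0}(c_1, c_2)$; the target is correct because $L_0 \cdot \xi = 0$ forces the two rank-one factors to have equal $L_0$-slope, making every such extension Mumford $L_0$-semi-stable. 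Since $\mathcal P$ is proper, its image is closed. Identifying the image with $\widetilde{\mathcal Z}^k(F)$ reduces, via properness of the Hilbert scheme and upper semi-continuity of $\dim \op{Hom}(I_{Z_1}(F), -)$, to showing that in any one-parameter degeneration whose generic fiber lies in $\widetilde{\mathcal Z}^k(F)$ the special fiber still admits a nonzero map from some $I_{Z_1}(F)$, and that the resulting subsheaf and quotient still have the required types.

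For smoothness, I would verify $\op{Ext}^2(E, E) = 0$ at every $E \in \widetilde{\mathcal Z}^k(F)$. Writing the defining extension as $0 \to A \to E \to B \to 0$ with $A = I_{Z_1}(F)$ and $B = I_{Z_2}(\Delta - F)$, Serre duality identifies $\op{Ext}^2(E, E)^*$ with $\op{Hom}(E, E \otimes \omega_S)$, and two applications of the long exact sequence for $\op{Hom}$ reduce the vanishing to the four groups $\op{Hom}(A, A \otimes \omega_S)$, $\op{Hom}(B, B \otimes \omega_S)$, $\op{Hom}(A, B \otimes \omega_S)$, and $\op{Hom}(B, A \otimes \omega_S)$. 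The diagonal terms embed into $H^0(\omega_S)$, which vanishes on a rational surface. The cross terms embed, after reflexivization of the inner $\mathcal Hom$-sheaves, into $H^0(\mathcal O(-\xi + K_S))$ and $H^0(\mathcal O(\xi + K_S))$, respectively; I would deduce their vanishing from the wall-crossing hypotheses on $\xi$ together with the positivity built into the setup, ruling out effective representatives of $\pm \xi + K_S$.

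I expect the closedness step to be the main obstacle, since one must rule out degenerations in which the limit subsheaf fails to be saturated of rank one with the prescribed Chern class, or the quotient acquires torsion, and one must treat the split-extension locus uniformly with the honest nonsplit extensions. The smoothness step, by contrast, should reduce to a mechanical diagram chase once the required $H^0$-vanishings are in place.
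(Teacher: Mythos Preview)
Your smoothness argument is correct and essentially equivalent in strength to the paper's, though organized differently. The paper does not split $\op{Ext}^2(E,E)$ along the filtration; instead it first notes (citing \cite{FQ}) that every nonsplit extension in $Z^k(F)$ is simple and every split one is polystable, and then invokes the earlier Lemma on smoothness at polystable or simple points: under $K_S<0$ one has an inclusion $\mathcal Hom(E,E)\otimes\omega_S\hookrightarrow\mathcal Hom(E,E)$, and since global endomorphisms are matrices of scalars they cannot vanish along $-K_S$, forcing $\op{Ext}^2(E,E)=0$. Your four--term computation reaches the same vanishing, and your cross terms do die because $L_0\cdot(\pm\xi+K_S)=L_0\cdot K_S<0$; the paper's route is just shorter and reusable for other strata.

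Your closedness argument, however, has a real gap, and it is not the degeneration issue you flag at the end. The object $\mathcal P$ you propose is not a proper scheme: the projectivization of the relative $\op{Ext}^1$ parametrizes nonzero extension classes up to scale and carries no natural ``zero section,'' while the split extension is not a point of that projective bundle but rather a point in the closure of its image inside the moduli stack. If instead you use the total space of the $\op{Ext}^1$ vector bundle you hit all of $Z^k(F)$ but lose properness. Even if you produced a proper source, the target $\widetilde{\mathcal Mum}_{L_0}(c_1,c_2)$ is a non-separated Artin stack, so ``image of a proper morphism is closed'' is not automatic and would itself require justification. The paper bypasses all of this with a one-line observation you are missing: membership in $Z^k(F)$ is equivalent to $E$ (suitably twisted) admitting a surjection onto a rank-one torsion-free sheaf with the Hilbert polynomial of $I_{Z_2}$, which is a closed condition because it is exactly the condition that $E$ lie in the image of the relevant relative Quot scheme. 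The kernel is then automatically of the form $I_{Z_1}(F)$ with the correct length by rank and Chern-class bookkeeping on a smooth surface, so no separate analysis of degenerations is needed.
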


\begin{proof}
 A Mumford $L_0$-semi-stable sheaf $E$ lies in $Z^{k}(F)$ if and only if there is a surjective map
 \begin{displaymath}
  E(-F) \to I_{Z_2} \to 0
 \end{displaymath}
 with $\xi = 2F - \Delta$ and $l(Z_2) = l_{\xi} - k$. This is a closed condition as it states $E(-F)$ lies in the Quot scheme for the Hilbert polynomial associated to $I_{Z_2}$. 
 
 By \cite[Lemma 2.2]{FQ}, the non-split extensions in $Z^k(F)$ are simple. The split extensions are polystable. So Lemma~\ref{lemma: smoothness at wall points} gives the last statement.
\end{proof}

Applying this proposition with the switch $F \to \Delta -F$ also gives the corresponding statement for $\widetilde{\mathcal Z}^k(\Delta - F)$. 

The stacks $\widetilde{\mathcal Z}^k(F)$ admit particularly simple geometric descriptions. Let $\mathcal E^k(F)$ be the coherent sheaf on $H^k := \op{Hilb}^k(S) \times \op{Hilb}^{l_{\xi} - k}(S)$ classifying the extensions appearing in $Z^k(F)$. By \cite[Lemma 2.6]{FQ}, $\mathcal E^k(F)$ is locally-free. Let
\begin{displaymath}
 X^k(F) := \underline{\op{Spec}}(\mathcal E^k(F))
\end{displaymath}
be the associated geometric vector bundle on $H^k$. There is a natural action of $\mathbb{G}_m^2$ on $X^k(F)$ given by endomorphisms of $I_{Z_1}(F) \oplus I_{Z_2}(\Delta - F)$. 

\begin{proposition}
 There are isomorphisms
 \begin{align*}
  \widetilde{\mathcal Z}^k(F) & \cong [X^k(F)/\mathbb{G}_m^2] \\
  \widetilde{\mathcal Z}^k(\Delta-F) & \cong [X^k(\Delta-F)/\mathbb{G}_m^2].
 \end{align*}
\end{proposition}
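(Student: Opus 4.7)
The plan is to construct a natural morphism $\Phi : [X^k(F)/\mathbb{G}_m^2] \to \widetilde{\mathcal Z}^k(F)$ and verify that it is an equivalence of stacks; the second isomorphism follows by the same argument with $F$ replaced by $\Delta - F$. I would first build a morphism $\tilde\Phi$ from the atlas $X^k(F)$. Since $\mathcal E^k(F)$ is the (locally-free) relative $\op{Ext}^1$-sheaf classifying extensions of $I_{Z_2}(\Delta - F)$ by $I_{Z_1}(F)$ over $H^k = \op{Hilb}^k(S) \times \op{Hilb}^{l_{\xi}-k}(S)$, the total space $X^k(F) = \underline{\op{Spec}}(\mathcal E^k(F))$ carries a tautological universal extension class. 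Pulling the universal ideal sheaves back to $X^k(F) \times S$ and forming the extension with this tautological class produces a flat family $\mathcal E$ of coherent sheaves on $S$, with every fibre lying in $Z^k(F)$. This family yields the desired morphism $\tilde\Phi : X^k(F) \to \widetilde{\mathcal Z}^k(F)$.

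Next, I would introduce the $\mathbb{G}_m^2$-action. Viewing $\mathbb{G}_m^2$ as the group of diagonal scaling automorphisms of $I_{Z_1}(F) \oplus I_{Z_2}(\Delta - F)$, the pair $(\alpha,\beta)$ acts on $X^k(F)$ through the induced action on $\op{Ext}^1$, scaling the extension class by $\alpha \beta^{-1}$, and lifts to a $\mathbb{G}_m^2$-equivariant structure on the universal extension where the subsheaf has weight $\alpha$ and the quotient has weight $\beta$. The middle term $\mathcal E$ inherits this equivariance; since $\widetilde{\mathcal Z}^k(F)$ records only the isomorphism class of $\mathcal E$ and forgets the filtration, $\tilde\Phi$ descends to the claimed morphism $\Phi$ from the quotient stack.

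To show $\Phi$ is an equivalence, I would check essential surjectivity and faithfulness on $T$-points. Essential surjectivity is immediate from the definition of $\widetilde{\mathcal Z}^k(F)$: any $T$-family $E$ in this substack admits, étale-locally on $T$, a presentation as an extension of the required form, giving the local lift to $X^k(F)$. For faithfulness, the key is uniqueness of the filtration: two lifts $(Z_1,Z_2,\epsilon)$ and $(Z_1',Z_2',\epsilon')$ producing isomorphic $E$'s must differ by a $\mathbb{G}_m^2$-action. For non-split extensions, \cite[Lemma 2.2]{FQ} shows $E$ is simple, so its endomorphism ring is just scalars; the filtration with numerical type $(F,\Delta-F)$ is unique on the wall $W^{\xi}$ since $L_0$ lies on no other wall, forcing $Z_1 = Z_1'$, $Z_2 = Z_2'$, and $\epsilon$, $\epsilon'$ to agree up to the scalar rescaling captured by $\mathbb{G}_m^2$. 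For split (polystable) extensions, $E \cong I_{Z_1}(F) \oplus I_{Z_2}(\Delta - F)$ has automorphism group exactly $\mathbb{G}_m^2$, matching the stabilizer in $X^k(F)$ at the zero section.

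The main obstacle is precisely this uniqueness of the filtration on the wall. The hypothesis that $L_0$ lies on no wall other than $W^{\xi}$, combined with the Friedman--Qin analysis \cite{FQ}, is exactly what guarantees it and thereby ensures $\Phi$ is an isomorphism of stacks.
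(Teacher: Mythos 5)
Your proposal is correct and follows essentially the same route as the paper: the whole content is that, by \cite[Lemma 2.2]{FQ}, the subsheaf $I_{Z_1}(F) \subset E$ (hence $Z_1$, $Z_2$ and the extension class up to scaling) is uniquely determined, so the tautological family on $X^k(F)$ descends to an isomorphism of stacks $[X^k(F)/\mathbb{G}_m^2] \cong \widetilde{\mathcal Z}^k(F)$. Your additional discussion of the universal extension, the weights of the $\mathbb{G}_m^2$-action, and the stabilizers in the split and non-split cases simply spells out details the paper leaves implicit.
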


\begin{proof}
 From \cite[Lemma 2.2.i]{FQ}, given $E$ in $\widetilde{\mathcal Z}^k(F)$, then $Z_1$ and $Z_2$ are uniquely determined and the map $\mathcal I_{Z_1}(F) \to E$ is unique up to scaling. So the extension class for a given $E$ is determined up to scaling. 
\end{proof}

To move closer to schemes, we now rigidify our stacks and remove the residual $\mathbb{G}_m$ coming from multiples of the identity. We denote the rigidified stacks by removing the tilde, e.g. the $\mathbb{G}_m$-rigidified moduli stack of Mumford $L_0$-semi-stable sheaves will be denoted by $\mathcal Mum_{L_0}(c_1,c_2)$. We do this now, at the current point in the argument, to guarantee the following result holds.

\begin{proposition}
 The substacks $\mathcal Z^k(F)$ and $\mathcal Z^{l_{\xi} - k}(\Delta - F)$ form an elementary wall crossing in the stack $\mathcal Mum_{L_0}(c_1,c_2)$. 
\end{proposition}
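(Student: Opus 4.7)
The plan is to check each piece of Definition~\ref{definition: elementary strata and elementary wall crossing}: that each $\mathcal Z^{\bullet}$ is a stacky BB stratum in $\mathcal Mum_{L_0}(c_1,c_2)$, that the weights of $\Omega_{\mathcal Z \mid \mathcal X}$ are strictly negative, that the two fixed substacks coincide, and that the two $\mathbb{G}_m$-embeddings differ by inversion.

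First I would handle the stacky BB structure. After rigidifying the diagonal $\mathbb{G}_m \subset \mathbb{G}_m^2$, the residual torus can be taken as the antidiagonal; this $\mathbb{G}_m$ scales the fiber coordinates of the geometric vector bundle $X^k(F) \to H^k$ with uniform strictly positive weight, so the action trivially extends to $\mathbb{A}^1$. Thus $\mathcal Z^k(F) \cong [X^k(F)/\mathbb{G}_m]$ is a stacky BB stratum whose fixed substack is the zero section, namely $H^k = \op{Hilb}^k(S) \times \op{Hilb}^{l_{\xi} - k}(S)$, parametrizing the split extensions $I_{Z_1}(F) \oplus I_{Z_2}(\Delta - F)$. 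The same argument, with $F$ replaced by $\Delta - F$ and $k$ by $l_{\xi} - k$, shows $\mathcal Z^{l_{\xi} - k}(\Delta - F)$ is a stacky BB stratum; its fixed substack is $\op{Hilb}^{l_{\xi} - k}(S) \times \op{Hilb}^k(S)$, which matches $H^k$ under the tautological swap of Hilbert factors that accompanies exchanging the two summands.

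Next I would compute weights at a closed fixed point $E = I_{Z_1}(F) \oplus I_{Z_2}(\Delta - F)$. Smoothness of $\mathcal Mum_{L_0}(c_1,c_2)$ at $E$ (Lemma~\ref{lemma: smoothness at wall points}) identifies the tangent space with $\op{Ext}^1(E,E)$, which splits under the $\mathbb{G}_m^2$ action into four pieces: the two diagonal pieces $\op{Ext}^1(I_{Z_i}(\cdot), I_{Z_i}(\cdot))$ have weight zero and recover $T_{(Z_1,Z_2)} H^k$, while the two off-diagonal pieces $\op{Ext}^1(I_{Z_1}(F), I_{Z_2}(\Delta-F))$ and $\op{Ext}^1(I_{Z_2}(\Delta-F), I_{Z_1}(F))$ carry opposite non-trivial weights under the antidiagonal $\mathbb{G}_m$. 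By construction, tangent vectors to $\mathcal Z^k(F)$ inside $\mathcal Mum_{L_0}(c_1,c_2)$ come from extensions with subsheaf $I_{Z_1}(F)$, i.e.\ from precisely one of the two off-diagonal summands, whose weight we can arrange to be positive; the complementary off-diagonal summand then gives the conormal, whose weight is strictly negative. The identical decomposition applies to $\mathcal Z^{l_{\xi} - k}(\Delta - F)$ with the two off-diagonal summands interchanged, producing the required strictly negative conormal weight for its own $\mathbb{G}_m$.

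Finally I would note that this interchange is exactly the statement that the two embeddings of $\mathbb{G}_m$ into $\op{Aut}(I_{Z_1}(F) \oplus I_{Z_2}(\Delta - F))$ differ by inversion: the one defining $\mathcal Z^k(F)$ assigns weight $+1$ to the $\op{Ext}^1(I_{Z_2}(\Delta-F), I_{Z_1}(F))$ direction (the one that flows into $H^k$), whereas the one defining $\mathcal Z^{l_{\xi} - k}(\Delta - F)$ assigns weight $+1$ to the opposite off-diagonal, hence weight $-1$ in the first labeling. This verifies the elementary wall crossing condition. The main obstacle is the weight bookkeeping in the $\op{Ext}^1$-decomposition: one needs to correctly pair the two off-diagonal summands with the two strata and confirm that the $\mathbb{G}_m$ action on extensions agrees, via Yoneda, with the antidiagonal scaling of the direct summands, so that the strata really do assemble into a wall crossing rather than two strata for the same $\mathbb{G}_m$.
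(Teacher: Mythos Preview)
Your proposal is correct and follows essentially the same route as the paper's proof: exhibit $\mathcal Z^k(F)\cong[X^k(F)/\mathbb{G}_m]$ as a stacky BB stratum via the residual torus after rigidification, identify the fixed substack with $H^k$, check the conormal weight is strictly negative, and observe that the two one-parameter subgroups differ by inversion once the diagonal $\mathbb{G}_m$ has been removed. The only real difference is that where you compute the conormal weight by hand from the decomposition of $\op{Ext}^1(E,E)$ at a split point, the paper instead invokes \cite[Section~3]{FQ} to identify the conormal sheaf globally with $\mathcal E^{l_{\xi}-k}(\Delta-F)$, which immediately has weight $-1$; your local computation is a fine substitute. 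One small wording issue: the ``complementary off-diagonal summand'' of $\op{Ext}^1(E,E)$ is literally the \emph{normal} direction, not the conormal; the conclusion about the sign of the weight is nonetheless correct, but you should phrase it as identifying the normal bundle fibre and then passing to the conormal.
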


\begin{proof}
 We have a presentation of $\mathcal Z^k(F)$ given by
 \begin{displaymath}
  \mathbb{G}_m \times X^k(F) = \mathbb{G}_m^2/\mathbb{G}_m \times X^k(F) \rightrightarrows X^k(F)
 \end{displaymath}
 Now take the $\mathbb{G}_m$ given by the first summand in $\mathbb{G}_m$, i.e scalar endomorphisms of $I_{Z_1}(F)$. Under this $\mathbb{G}_m$-action, $X^k(F)$ contracts onto the zero locus, $H^k$, so is a BB-stratum. This also gives the morphisms $l: \mathbb{G}_m \times X^k(F) \to \mathbb{G}_m^2 \times X^k(F)$ and $l: \mathbb{G}_m \times X^k(F) \to \mathbb{G}_m^2/\mathbb{G}_m \times X^k(F)$. The adjoint action is trivial on the first factor and is the action on the second. Thus, it extends to $\mathbb{A}^1$ and $\mathcal Z^k(F)$ is a stacky BB stratum. Computations in \cite[Section 3]{FQ} identify the conormal sheaf of $\mathcal Z^k(F)$ with $\mathcal E^{l_{\xi}-k}(\Delta - F)$ which has weight $-1$ with respect to this $\mathbb{G}_m$-action. So $\mathcal Z^k(F)$ is an elementary stratum. Similarly, one shows that $\mathcal Z^{l_{\xi} -k}(\Delta - F)$ is an elementary stratum. Since we have rigidified, the two choices of $\mathbb{G}_m$ actions on the fixed substacks differ by inversion.
\end{proof}

From here on, we assume that 
\begin{displaymath}
 L_- \cdot (2F - \Delta) < 0 < L_+ \cdot (2F - \Delta).
\end{displaymath}
and 
\begin{displaymath}
 \omega_S^{-1} \cdot (2F - \Delta) \geq 0. 
\end{displaymath}

Next, we want to compare the moduli stacks $\mathcal M_{L_+}(c_1,c_2)$ and $\mathcal M_{L_-}(c_1,c_2)$ via a sequence to elementary wall-crossings. 

To do so, we consider the following intermediate stacks 
\begin{displaymath}
 \mathcal M^{\leq l, \geq t} := \mathcal Mum_{L_0}(c_1,c_2) \setminus \left( \bigcup_{k > l} \mathcal Z^k(F) \cup \bigcup_{k < t} \mathcal Z^k(\Delta - F) \right). 
\end{displaymath}

\begin{lemma} \label{lemma: ends of wall crossings}
 We have 
 \begin{align*}
  \mathcal M_{L_+}(c_1,c_2) & = \mathcal M^{\leq l_{\xi}, \geq l_{\xi}+1} \\
  \mathcal M_{L_-}(c_1,c_2) & = \mathcal M^{\leq -1, \geq 0}.
 \end{align*}
\end{lemma}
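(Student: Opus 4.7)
The plan is to unwind the definition of $\mathcal M^{\leq l, \geq t}$ and then invoke the wall-crossing analysis of \cite{FQ}. Since $\mathcal Z^k(F)$ involves $\op{Hilb}^{l_\xi - k}(S)$, this stratum is nonempty only for $k \in \{0, \ldots, l_\xi\}$, and similarly for $\mathcal Z^k(\Delta - F)$. Therefore
\begin{equation*}
 \mathcal M^{\leq l_\xi, \geq l_\xi+1} = \mathcal Mum_{L_0}(c_1,c_2) \setminus \bigcup_{k=0}^{l_\xi} \mathcal Z^k(\Delta - F), \qquad \mathcal M^{\leq -1, \geq 0} = \mathcal Mum_{L_0}(c_1,c_2) \setminus \bigcup_{k=0}^{l_\xi} \mathcal Z^k(F).
\end{equation*}
The two equalities are symmetric under the swap $F \leftrightarrow \Delta - F$ (which reverses the sign of $\xi$ and exchanges $L_+$ and $L_-$), so I focus on the statement for $\mathcal M_{L_+}$.

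For $\mathcal M_{L_+} \subseteq \mathcal Mum_{L_0}$: given $E$ Gieseker-$L_+$-semistable with a saturated rank-one subsheaf $\mathcal F' = \mathcal I_{Z'}(G) \subset E$, the inequality $L_+ \cdot (2G - c_1) \leq 0$ combined with the single-wall hypothesis forces $L_0 \cdot (2G - c_1) \leq 0$: along the segment $[L_0, L_+]$ the sign of $L \cdot (2G - c_1)$ is either constant, or $2G - c_1$ is proportional to $\xi$ (and then the pairing with $L_0$ vanishes). Either way, $E$ is $L_0$-Mumford-semistable.

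The main content is the two-sided characterization: an object $E \in \mathcal Mum_{L_0}(c_1,c_2)$ is Gieseker-$L_+$-semistable if and only if $E \notin \bigcup_k \mathcal Z^k(\Delta - F)$. For the forward direction I would use the lifting argument of \cite[Section 2]{FQ}: for $E \in \mathcal Z^k(\Delta - F)$, the defining extension produces a rank-one subsheaf of $E$ with first Chern class $F$, which $\mu_{L_+}$-destabilizes $E$ because $L_+ \cdot \xi > 0$. For the converse, assume $E \in \mathcal Mum_{L_0}$ has a Gieseker-$L_+$-destabilizer $\mathcal F' = \mathcal I_{Z'}(G) \subset E$. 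Leading-term comparison of reduced Hilbert polynomials gives $L_+ \cdot (2G - c_1) \geq 0$, while $L_0$-Mumford-semistability gives $L_0 \cdot (2G - c_1) \leq 0$; the single-wall assumption then forces $2G - c_1 \in \mathbb{Q} \cdot \xi$. The main obstacle is the remaining sign analysis: one must use the strict inequality at $L_+$ together with the Gieseker refinement when the Mumford slopes at $L_0$ coincide to rule out $G = F$ and identify $G = \Delta - F$; the resulting short exact sequence then exhibits $E$ as an object of $\mathcal Z^{\ell(Z')}(\Delta - F)$, contradicting the assumption that $E$ lies in the complement. The Gieseker-versus-Mumford distinction at the wall, which is precisely the content of the Friedman--Qin stratification, is what makes this final identification nontrivial.
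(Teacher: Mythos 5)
Your overall strategy---unwind the definition of $\mathcal M^{\leq l,\geq t}$, check $\mathcal M_{L_\pm}(c_1,c_2)\subseteq \mathcal Mum_{L_0}(c_1,c_2)$, and then characterize the complement of the strata by analyzing wall subsheaves---is reasonable, and it is essentially an attempt to reprove the cited result of Friedman--Qin; the paper itself offers no argument beyond the citation to \cite[Lemma 3.2.ii]{FQ}, so a self-contained proof along these lines is a legitimate alternative route. Your unwinding of $\mathcal M^{\leq l_\xi,\geq l_\xi+1}$ and $\mathcal M^{\leq -1,\geq 0}$ is correct, as is the Hodge-index/single-wall reduction showing a destabilizer $\mathcal I_{Z'}(G)$ must have $2G-c_1\in\mathbb Q\cdot\xi$.

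However, the two key identifications in your third paragraph contradict the definitions you start from, and this is a genuine gap. By the paper's definition (obtained from $\mathcal Z^k(F)$ by the switch $F\mapsto \Delta-F$), an object of $\mathcal Z^k(\Delta-F)$ sits in an extension with \emph{sub} $I_{Z_1}(\Delta-F)$ and \emph{quotient} $I_{Z_2}(F)$; the defining extension therefore produces a subsheaf of first Chern class $\Delta-F$, not $F$, and since $L_+\cdot(2(\Delta-F)-\Delta)=-L_+\cdot\xi<0$ this subsheaf does not $\mu_{L_+}$-destabilize. Indeed, the non-split members of $\mathcal Z^k(\Delta-F)$ are exactly the sheaves that are $L_+$-stable but $L_-$-unstable, so your forward direction fails for them. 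Symmetrically, in your converse step an $L_+$-Gieseker-destabilizer $\mathcal I_{Z'}(G)$ satisfies $L_+\cdot(2G-c_1)\geq 0$, and since $L_+\cdot\xi>0$ the single-wall reduction forces $2G-c_1=+\xi$, i.e.\ $G=F$, not $G=\Delta-F$ as you assert; saturating then places $E$ in $\mathcal Z^{\ell(Z')}(F)$. Thus a correct execution of your own argument identifies $\mathcal M_{L_+}(c_1,c_2)$ with the complement of $\bigcup_k\mathcal Z^k(F)$, which under the paper's definition of $\mathcal M^{\leq l,\geq t}$ is $\mathcal M^{\leq -1,\geq 0}$, the opposite of the matching you (and the lemma as literally printed) claim. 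This signals a labeling tension between the paper's conventions for $\mathcal Z^k(F)$ and the statement being proved, but within your write-up the sub/quotient swap is an error, not a convention choice: you cannot both define $\mathcal Z^k(\Delta-F)$ with sub $I_{Z_1}(\Delta-F)$ and claim its members are $L_+$-destabilized by a class-$F$ subsheaf. To repair the proof you must fix one consistent convention (which stratum has the $L_+$-destabilizing sub), redo the sign analysis, and only then match the result with the indices $\leq l_\xi,\geq l_\xi+1$ versus $\leq -1,\geq 0$.
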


\begin{proof}
 This is \cite[Lemma 3.2.ii]{FQ}.
\end{proof}

We get a sequence of elementary wall crossing given by
\begin{displaymath}
 \mathcal M^{\leq l, \geq l+1} \subset \mathcal M^{\leq l+1, \geq l+1} \supset \mathcal M^{\leq l+1, \geq l+2}.
\end{displaymath}
Applying Theorem~\ref{theorem: elementary wall crossing}, we get the following statement. 

\begin{proposition} \label{proposition: one wall crossing}
 With the assumptions as above, there is a semi-orthogonal decomposition
 \begin{displaymath}
  \dbcoh{ \mathcal M^{\leq l+1, \geq l+2} } = \left\langle \underbrace{\dbcoh{H^l}, \ldots, \dbcoh{H^l}}_{\mu_{\xi}}, \dbcoh{ \mathcal M^{\leq l, \geq l+1}}\right\rangle 
 \end{displaymath}
 where 
 \begin{align*}
  H^l & := \op{Hilb}^l(S) \times \op{Hilb}^{l_{\xi} - l}(S) \\
  \mu_{\xi} & := \omega_S^{-1} \cdot (2F - \Delta) = \omega_S^{-1} \cdot \xi.
 \end{align*}
\end{proposition}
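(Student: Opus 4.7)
This is an application of Theorem~\ref{theorem: elementary wall crossing} to the elementary wall crossing set up in the previous proposition. The plan is to take the ambient smooth Artin stack to be the middle term $\mathcal M^{\leq l+1,\geq l+1}$, to identify its two relevant strata with the $\mathcal Z_\pm$ of Theorem~\ref{theorem: elementary wall crossing}, and to read off the semi-orthogonal decomposition. The two open complements $\mathcal U_\pm$ of Theorem~\ref{theorem: elementary wall crossing} are exactly $\mathcal M^{\leq l+1,\geq l+2}$ and $\mathcal M^{\leq l,\geq l+1}$ by the definition of $\mathcal M^{\leq a,\geq b}$, and smoothness of $\mathcal M^{\leq l+1,\geq l+1}$ along each stratum is supplied by Lemma~\ref{lemma: smoothness at wall points} together with Remark~\ref{remark: singular case}, since $S$ rational gives $K_S<0$ and hence the $\op{Ext}^2$-vanishing needed at the polystable sheaves on the wall.

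Next I would compute the two weights $t_{\mathcal Z_\pm}$ of the relative canonical sheaves. By the description of each $\mathcal Z^\bullet(\cdot)$ as a quotient $[X^\bullet(\cdot)/\mathbb G_m^2]$ with $X^\bullet(\cdot)$ the total space of the $\op{Ext}^1$-bundle $\mathcal E^\bullet(\cdot)$ over a product of Hilbert schemes, and by the identification of the conormal bundle with $\mathcal E^\bullet(\cdot)$ at weight $-1$ under the rigidifying $\mathbb G_m$ (as in the proof of the preceding proposition), the weight $t_{\mathcal Z}$ is determined by the rank of $\mathcal E^\bullet(\cdot)$. Those ranks are $\op{Ext}^1$'s between ideal-sheaf twists of line bundles. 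Hirzebruch-Riemann-Roch, combined with Serre duality and the $\op{Ext}^0,\op{Ext}^2$-vanishing provided by the hypothesis $\omega_S^{-1}\cdot\xi\geq 0$, yields
\[
 t_{\mathcal Z_-}-t_{\mathcal Z_+}=\omega_S^{-1}\cdot\xi=\mu_\xi,
\]
reproducing the computation of \cite[Section 3]{FQ}.

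Feeding this into Theorem~\ref{theorem: elementary wall crossing} produces a semi-orthogonal decomposition of $\dbcoh{\mathcal M^{\leq l+1,\geq l+2}}$ whose final piece is $\dbcoh{\mathcal M^{\leq l,\geq l+1}}$ embedded via $\Phi^+_d$, and whose first $\mu_\xi=t_{\mathcal Z_-}-t_{\mathcal Z_+}$ pieces are images of $\mathcal C_j(\mathcal Z)$'s under $\widetilde\Upsilon_j$. The last step is to identify each $\mathcal C_j(\mathcal Z)$ with $\dbcoh{H^l}$: the rigidified fixed substack is exactly the product of Hilbert schemes appearing in the definition of $H^l$, the residual $\mathbb G_m$ acts trivially on this scheme, and pulling back along the projection from the stratum to its fixed substack identifies the weight-$j$ subcategory of $\dbcoh{\mathcal Z_0}$ with $\dbcoh{H^l}$ up to a twist which disappears at the triangulated level.

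The principal technical obstacle is the Riemann-Roch identity $t_{\mathcal Z_-}-t_{\mathcal Z_+}=\omega_S^{-1}\cdot\xi$; everything else is a direct invocation of Theorem~\ref{theorem: elementary wall crossing} or a standard identification between the equivariant derived category on a trivial-action stack and the derived category of the underlying scheme.
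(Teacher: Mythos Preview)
Your proposal is essentially the paper's own argument, expanded. The paper's proof is a single sentence: apply Theorem~\ref{theorem: elementary wall crossing} and cite \cite[Lemma~2.6]{FQ} for the rank computation $t_{\mathcal Z_-}-t_{\mathcal Z_+}=\omega_S^{-1}\cdot\xi$; you have written out exactly what that sentence unpacks to, including the identification of $\mathcal U_\pm$ with $\mathcal M^{\leq l+1,\geq l+2}$ and $\mathcal M^{\leq l,\geq l+1}$ and of the rigidified fixed substack with $H^l$. One small slip: ``$S$ rational gives $K_S<0$'' is not literally true for an arbitrary rational surface, but smoothness along the strata was already established in the proposition immediately preceding this one, so you can simply cite that rather than re-deriving it.
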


\begin{proof}
 This is an immediate application of Theorem~\ref{theorem: elementary wall crossing} using \cite[Lemma 2.6]{FQ} to compute the number of copies of $\dbcoh{H^l}$. 
\end{proof}

\begin{corollary} \label{corollary: SOD for whole wall crossing}
 Let $S$ be a smooth rational surface over $\C$ with $L_-$ and $L_+$ ample lines bundles on $S$ separated by a single wall defined by unique divisor $\xi$ satisfying
 \begin{gather*}
  L_- \cdot \xi < 0 < L_+ \cdot \xi \\
  0 \leq \omega_S^{-1} \cdot \xi.
 \end{gather*}
 Let $\mathcal M_{L_{\pm}}(c_1,c_2)$ be the $\mathbb{G}_m$-rigidified moduli stack of Gieseker $L_{\pm}$-semi-stable torsion-free sheaves of rank $2$ with first Chern class $c_1$ and second Chern class $c_2$.
 
 There is a semi-orthogonal decomposition 
 \begin{gather*}
  \dbcoh{ \mathcal M_{L_+}(c_1,c_2) } = \left\langle \underbrace{\dbcoh{H^{l_{\xi}}}, \ldots,  \dbcoh{H^{l_{\xi}}}}_{\mu_{\xi}}, \ldots  \right. \\
  \left. \underbrace{\dbcoh{H^0}, \ldots, \dbcoh{H^0}}_{\mu_{\xi}}, \dbcoh{ \mathcal M_{L_-}(c_1,c_2)} \right\rangle 
 \end{gather*}
 where 
 \begin{align*}
  l_{\xi} & := (4c_2 - c_1^2 + \xi^2)/4 \\
  H^l & := \op{Hilb}^l(S) \times \op{Hilb}^{l_{\xi} - l}(S) \\
  \mu_{\xi} & := \omega_S^{-1} \cdot \xi
 \end{align*}
 with the convention that $\op{Hilb}^0(S) := \op{Spec} \C$.
\end{corollary}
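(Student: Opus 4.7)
The plan is to iterate Proposition~\ref{proposition: one wall crossing} across every stratum that separates $\mathcal M_{L_+}(c_1,c_2)$ from $\mathcal M_{L_-}(c_1,c_2)$ inside the common ambient stack $\mathcal Mum_{L_0}(c_1,c_2)$, and then splice the resulting semi-orthogonal decompositions together. This is essentially bookkeeping: the hard work has already been done in the preceding subsection, in particular in showing that each successive pair $(\mathcal Z^{k}(F), \mathcal Z^{l_\xi - k}(\Delta - F))$ is an elementary wall crossing with the appropriate conormal/canonical weights, and in the proof of Proposition~\ref{proposition: one wall crossing} itself.

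First I would invoke Lemma~\ref{lemma: ends of wall crossings} to rewrite the two endpoints as intermediate stacks in the Friedman--Qin notation:
\[
 \mathcal M_{L_+}(c_1,c_2) = \mathcal M^{\leq l_\xi,\, \geq l_\xi+1}, \qquad \mathcal M_{L_-}(c_1,c_2) = \mathcal M^{\leq -1,\, \geq 0}.
\]
Then I would consider the chain of open inclusions and elementary wall crossings
\[
 \mathcal M^{\leq l_\xi,\, \geq l_\xi+1} \;\leftrightsquigarrow\; \mathcal M^{\leq l_\xi-1,\, \geq l_\xi} \;\leftrightsquigarrow\; \cdots \;\leftrightsquigarrow\; \mathcal M^{\leq 0,\, \geq 1} \;\leftrightsquigarrow\; \mathcal M^{\leq -1,\, \geq 0}
\]
obtained by repeatedly swapping the strata $\mathcal Z^{l+1}(F)$ and $\mathcal Z^{l+1}(\Delta-F)$ inside the ambient stack $\mathcal M^{\leq l+1,\, \geq l+1}$, for $l = l_\xi - 1, l_\xi - 2, \ldots, -1$. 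At each of these finitely many steps the hypotheses of Proposition~\ref{proposition: one wall crossing} apply verbatim, because the numerical inequalities $L_-\cdot\xi < 0 < L_+\cdot\xi$ and $\omega_S^{-1}\cdot\xi \geq 0$ are preserved, the proposition's multiplicity $\mu_\xi = \omega_S^{-1}\cdot\xi$ is independent of $l$, and the fixed locus of the relevant stratum is identified with $H^{l+1} = \op{Hilb}^{l+1}(S) \times \op{Hilb}^{l_\xi - l -1}(S)$.

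The proposition then supplies, at each step, a semi-orthogonal decomposition of the form
\[
 \dbcoh{\mathcal M^{\leq l+1,\, \geq l+2}} = \bigl\langle \underbrace{\dbcoh{H^{l+1}}, \ldots, \dbcoh{H^{l+1}}}_{\mu_\xi}, \; \dbcoh{\mathcal M^{\leq l,\, \geq l+1}} \bigr\rangle,
\]
and I would splice these together using the standard fact that a semi-orthogonal decomposition of a right-most component refines the ambient decomposition: if $\mathcal D = \langle \mathcal A, \mathcal B\rangle$ and $\mathcal B = \langle \mathcal C, \mathcal E\rangle$ then $\mathcal D = \langle \mathcal A, \mathcal C, \mathcal E\rangle$. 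Telescoping from $l+1 = l_\xi$ down to $l+1 = 0$ produces exactly the chain of blocks $\dbcoh{H^{l_\xi}}, \dbcoh{H^{l_\xi-1}}, \ldots, \dbcoh{H^0}$, each appearing with multiplicity $\mu_\xi$, with $\dbcoh{\mathcal M^{\leq -1,\, \geq 0}} = \dbcoh{\mathcal M_{L_-}(c_1,c_2)}$ sitting as the right-most component.

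The only points requiring any care are boundary effects: at the extreme indices $l+1 = 0$ or $l+1 = l_\xi$, one of the two Hilbert scheme factors in $H^{l+1}$ degenerates and must be interpreted via the stated convention $\op{Hilb}^0(S) := \op{Spec}\C$, which renders $H^0 \simeq \op{Hilb}^{l_\xi}(S)$ and $H^{l_\xi} \simeq \op{Hilb}^{l_\xi}(S)$ (two canonically different summands). The main obstacle is not conceptual but notational: one must be careful to match the indexing of the strata $\mathcal Z^{k}(F)$ with that of their fixed loci $H^{k}$, and to confirm that the multiplicity $\mu_\xi$ computed via the weight of the relative canonical bundle in the proof of Proposition~\ref{proposition: one wall crossing} really is constant across all walls in the sequence, so that the blocks stack uniformly. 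Once this indexing is pinned down, the Corollary follows by a finite induction.
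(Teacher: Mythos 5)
Your proposal is correct and is essentially the paper's own proof: the Corollary is obtained by iterating Proposition~\ref{proposition: one wall crossing} across the chain of intermediate stacks $\mathcal M^{\leq l,\geq l+1}$, splicing the resulting semi-orthogonal decompositions, and using Lemma~\ref{lemma: ends of wall crossings} to identify the two endpoints with $\mathcal M_{L_\pm}(c_1,c_2)$. Your careful tracking of the Hilbert-scheme index (writing $H^{l+1}$ for the stratum swapped inside $\mathcal M^{\leq l+1,\geq l+1}$, so the blocks run from $H^{l_\xi}$ down to $H^0$) is exactly the bookkeeping needed to match the statement of the Corollary.
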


\begin{proof}
 This is an iterated application of Proposition~\ref{proposition: one wall crossing} using Lemma~\ref{lemma: ends of wall crossings} to identify first and last moduli spaces. 
\end{proof}

\begin{remark}
 The results on wall crossing of moduli spaces of semi-stable sheaves in \cite{EG,FQ,MW} were originally obtained to compute the change in the Donaldson invariants under change of the polarization. Thus, Corollary~\ref{corollary: SOD for whole wall crossing} can be viewed as a categorification of that wall-crossing formula. 
\end{remark}

\subsection{An example}

Let us work out the consequences of Corollary~\ref{corollary: SOD for whole wall crossing} in the case $S = \mathbb{P}^1 \times \mathbb{P}^1$ with $c_1 = 5H_1 + 5H_2$ and $c_2 = 14$. We get the following decomposition of the ample cone into walls and chambers
\begin{center}
\begin{tikzpicture}
  [scale=1, vertex/.style={circle,draw=black!100,fill=black!100,thick, inner sep=0.5pt,minimum size=0.5mm}, cone/.style={->,very thick,>=stealth}, wall/.style={->,dashed,very thick,>=stealth}]
  \filldraw[fill=black!20!white,draw=white!100]
    (0,0) -- (6,0) -- (6,6) -- (0,6) -- (0,0);
  \draw[cone] (0,0) -- (6,6);
  \draw[cone] (0,0) -- (6,2);
  \draw[cone] (0,0) -- (2,6);
  \draw[wall] (0,0) -- (0,6.5);
  \draw[wall] (0,0) -- (6.5,0);
  \node at (6.4,6.4) {$W^{\xi_2}$};
  \node at (2,6.4) {$W^{\xi_1}$};
  \node at (6.6,2) {$W^{\xi_3}$};
  \node at (0.5,4) {$\mathcal C_I$};
  \node at (2.4,3.6) {$\mathcal C_{II}$};
  \node at (4,0.5) {$\mathcal C_{IV}$};
  \node at (3.6,2.4) {$\mathcal C_{III}$};
 \foreach \x in {0,1,...,6}
  \foreach \y in {0,1,...,6}
  {
    \node[vertex] at (\x,\y) {};
  }
\end{tikzpicture}
\end{center}
where 
\begin{align*}
 \xi_1 & = 3H_1-H_2 \\
 \xi_2 & = H_1 - H_2 \\
 \xi_3 & = -H_1 + 3H_2.
\end{align*}
We have
\begin{align*}
 l_{\xi_1} & = 0 , \mu_{\xi_1}  = 5 \\
 l_{\xi_2} & = 1 , \mu_{\xi_2}  = 0 \\
 l_{\xi_3} & = 0 , \mu_{\xi_3}  = 5.
\end{align*}

Let $\mathcal M_J$ denote the (rigidified) moduli stack of Gieseker $L$-semi-stable torsion free sheaves of rank $2$ with $c_1 = 5H_1+5H_2$ and $c_2 = 14$, $L \in \mathcal C_J$ and $J \in \{I,II,III,IV\}$. Applying Corollary~\ref{corollary: SOD for whole wall crossing} to crossing of $W_1$, we see that there is a semi-orthogonal decomposition
\begin{displaymath}
 \dbcoh{\mathcal M_{II}} = \left\langle E_1,\ldots, E_5, \dbcoh{\mathcal M_I} \right\rangle
\end{displaymath}
with $E_i$ exceptional. Applying it to $W_2$, we get an equivalence
\begin{displaymath}
 \Phi: \dbcoh{\mathcal M_{II}} \overset{\sim}{\to} \dbcoh{\mathcal M_{III}}.
\end{displaymath}
Applying it to $W_3$, we get a semi-orthogonal decomposition
\begin{displaymath}
 \dbcoh{\mathcal M_{III}} = \left\langle F_1,\ldots, F_5, \dbcoh{\mathcal M_{IV}} \right\rangle
\end{displaymath}
with $F_i$ exceptional.

The involution, $i$, that exchanges the two factors of $\mathbb{P}^1 \times \mathbb{P}^1$ induces isomorphisms
\begin{displaymath}
 \mathcal M_I \cong \mathcal M_{IV}
\end{displaymath}
and
\begin{displaymath}
 \mathcal M_{II} \cong \mathcal M_{III}
\end{displaymath}
so there are really only two moduli spaces here. Note however, that the equivalence $\Phi$ is not the pullback $i^*$ as it leaves unchanged sheaves that semi-stable in both chambers. Combining the two equivalences, we get an interesting autoequivalence of $i^* \circ \Phi$ of $\dbcoh{\mathcal M_{II}}$.



\begin{thebibliography}{99}

%
\bibitem[ACV03]{ACV}
D. Abramovich, A. Corti, A Vistoli. {\em Twisted bundles and admissible covers}. Special issue in honor of Steven L. Kleiman. Comm. Algebra 31 (2003), no. 8, 3547--3618.
%
%
\bibitem[ABCH13]{ABCH}
Arcara, A. Bertram, I. Coskun, J. Huizenga. {\em The minimal model program for the Hilbert scheme of points on $\mathbb{P}\sp 2$ and Bridgeland stability}. Adv. Math. 235 (2013), 580--626. 
%
%
%
%
%
%
%
%
%

\bibitem[BFK12]{BFKGIT}{}
M. Ballard, D. Favero, L. Katzarkov. \emph{Variation of Geometric Invariant Theory quotients and derived categories}. \href{http://arxiv.org/pdf/math/1203.6643.pdf}{arXiv:1203.6643}. 
%
%
%
%
%
%
%
\bibitem[B-B73]{BB}{}
A. Bia\l{}ynicki-Birula. \emph{Some theorems on actions of algebraic groups}. Ann. of Math. (2) 98 (1973), 480-497.
%
%
%
%
%
\bibitem[BO95]{BO95}{}
A. Bondal, D. Orlov. {\em Semi-orthogonal decompositions for algebraic varieties.} Preprint MPI/95-15. \href{http://arxiv.org/pdf/math.AG/9506012.pdf}{arXiv:math.AG/9506012}.
%
%
\bibitem[Bri02]{Bridgeland-flops}
T. Bridgeland. {\em Flops and derived categories}. Invent. Math. 147 (2002), no. 3, 613-632.
%
%
%
%
%
%
%
%
%
%
%
%
%
%
%
\bibitem[DS12]{DS}{}
W. Donovan, E. Segal. {\em Window shifts, flop equivalences, and Grassmannian twists}. \href{http://arxiv.org/pdf/1206.0219.pdf}{ arXiv:1206.0219}.
%
%
%
%
%
%
\bibitem[EG95]{EG}
G. Ellingsrud, L. G\"ottsche. {\em Variation of moduli spaces and Donaldson invariants under change of polarization}. J. Reine Angew. Math. 467 (1995), 1-49.
%
%
\bibitem[FQ95]{FQ}
R. Friedman, Z. Qin. {\em Flips of moduli spaces and transition formulas for Donaldson polynomial invariants of rational surfaces}. Comm. Anal. Geom. 3 (1995), no. 1-2, 11--83.
%
%
%
%
%
\bibitem[Gie77]{Gieseker}
D. Gieseker. {\em On the moduli of vector bundles on an algebraic surface}. Ann. of Math. (2) 106 (1977), no. 1, 45--60.
%
%
\bibitem[H-L12]{HL12}{}
D. Halpern-Leistner. {\em The derived category of a GIT quotient}. \href{http://arxiv.org/pdf/1203.0276.pdf}{arXiv:1203.0276}.
%
\bibitem[H-L14]{HL14}{}
D. Halpern-Leistner. {\em On the derived category of a stack with a $\theta$-stratification}. In preparation.
%
%
%
%
\bibitem[HHP08]{HHP}{}
M. Herbst, K. Hori, D. Page. \emph{Phases Of N=2 Theories In 1+1 Dimensions With Boundary}. \href{http://arxiv.org/pdf/0803.2045.pdf}{arXiv:0803.2045}.
\bibitem[HW12]{HW}
M. Herbst, J. Walcher. \emph{On the unipotence of autoequivalences of toric complete intersection Calabi-Yau categories}. Math. Ann. 353 (2012), no. 3, 783-802.
%
%
%
%
%
%
%
%
%
%
\bibitem[HL10]{HL}
D. Huybrechts, M. Lehn. The geometry of moduli spaces of sheaves. Second edition. Cambridge Mathematical Library. Cambridge University Press, Cambridge, 2010.
%
%
%
%
%
%
%
%
%
%
%
%
\bibitem[Kaw02a]{KawD-K}{}
Y. Kawamata. {\em $D$-equivalence and $K$-equivalence}. J. Differential Geom. 61 (2002), no. 1, 147-171.
%
\bibitem[Kaw02b]{KawFF}
Y. Kawamata. {\em Francia's flip and derived categories}. Algebraic geometry, 197-215, de Gruyter, Berlin, 2002.
\bibitem[Kaw06]{Kaw06}{}
Y. Kawamata. \emph{Derived categories of toric varieties}. Michigan Math. J. 54 (2006), no. 3, 517-535.
%
%
%
%
%
%
%
%
%
%
%
%
%
%
%
%
\bibitem[Kuz10]{Kuz09b}{}
A. Kuznetsov. \emph{Derived categories of cubic fourfolds}. Cohomological and geometric approaches to rationality problems, 219-243, Progr. Math., 282, Birkh\"{a}user Boston, Inc., Boston, MA, 2010. 
%
%
%
%
%
%
%
%
%
%
%
%
%
%
%
%
%
\bibitem[MW97]{MW}
K. Matsuki, R. Wentworth. {\em Mumford-Thaddeus principle on the moduli space of vector bundles on an algebraic surface}. Internat. J. Math. 8 (1997), no. 1, 97--148.
%
%
%
\bibitem[Mum62]{Mum62}
D. Mumford. {\em Projective invariants of projective structures and applications}. 1963 Proc. Internat. Congr. Mathematicians (Stockholm, 1962) pp. 526--530.
%
%
%
%
%
%
%
\bibitem[Orl92]{Orl92}
D. Orlov. \emph{Projective bundles, monoidal transformations, and derived categories of coherent sheaves}. Izv. Ross. Akad. Nauk Ser. Mat. 56 (1992), no. 4, 852-862.
%
\bibitem[Orl97]{OrlK3}
D. Orlov. {\em Equivalences of derived categories and $K3$ surfaces}. Algebraic geometry, 7. J. Math. Sci. (New York) 84 (1997), no. 5, 1361-1381.
%
%
%
%
\bibitem[Orl09]{Orl09}{}
D. Orlov. \emph{Derived categories of coherent sheaves and triangulated categories of singularities}. Algebra, arithmetic, and geometry: in honor of Yu. I. Manin. Vol. II, 503-531, Progr. Math., 270, Birkh\"{a}user Boston, Inc., Boston, MA, 2009.
%
%
%
%
%
%
%
%
%
%
%
%
%
%
%
%
%
%
\bibitem[Seg09]{Seg2}{}
E. Segal. \emph{Equivalences between GIT quotients of Landau-Ginzburg B-models}. Comm. Math. Phys. 304 (2011), no. 2, 411-432.
%
%
%
%
\bibitem[Shi10]{Shipman}{}
I. Shipman. {\em A geometric approach to Orlov's theorem}. Compos. Math. 148 (2012), no. 5, 1365-1389.
%
%
%
%
\bibitem[Stacks]{stacks-project}
{The Stacks Project} authors. Stacks project. \href{http://stacks.math.columbia.edu/}{http://stacks.math.columbia.edu/}.
%
%
%
\bibitem[Tak72]{Takemoto}
Takemoto. {\em Stable vector bundles on algebraic surfaces}. Nagoya Math. J. 47 (1972), 29--48.
%
\bibitem[Tel00]{Tel}{}
C. Teleman. {\em The quantization conjecture revisited}. Ann. of Math. (2) 152 (2000), no. 1, 1-43.
%
%
%
%
%

%
\bibitem[VdB04]{VdB}{}
M. Van den Bergh. {\em Non-commutative crepant resolutions}. The legacy of Niels Henrik Abel, 749-770, Springer, Berlin, 2004.
%
%
%


\end{thebibliography}
\end{document}